\newtheorem{lem}{Lemma}[section]
\newtheorem{thm}[lem]{Theorem}
\newtheorem{cor}[lem]{Corollary}
\newtheorem{problem}[lem]{Problem}
\newtheorem{Proposition}[lem]{Proposition}
\begin{document}

\title{On non-empty cross-intersecting families}
\author{Chao Shi$^1$, Peter Frankl$^{2}$\footnote{E-mail: peter.frankl@gmail.com (P. Frankl)}, Jianguo Qian$^1$\footnote{Corresponding author. E-mail: jgqian@xmu.edu.cn (J.G. Qian)}\\
\small 1. School of Mathematical Sciences, Xiamen University, Xiamen 361005, PR China\\
\small {2. R\'{e}nyi Institute, Budapest, Hungary}}
\date{}
\maketitle
{\small{\bf Abstract.}\quad Let $2^{[n]}$ and $\binom{[n]}{i}$ be the power set and the class of all $i$-subsets of $\{1,2,\cdots,n\}$, respectively. We call two families $\mathscr{A}$ and $\mathscr{B}$ cross-intersecting if $A\cap B\neq \emptyset$ for any $A\in \mathscr{A}$ and $B\in \mathscr{B}$. In this paper we show that, for $n\geq k+l,l\geq r\geq 1,c>0$ and $\mathscr{A}\subseteq \binom{[n]}{k},\mathscr{B}\subseteq \binom{[n]}{l}$,  if $\mathscr{A}$ and $\mathscr{B}$ are cross-intersecting and $\binom{n-r}{l-r}\leq|\mathscr{B}|\leq \binom{n-1}{l-1}$, then
$$|\mathscr{A}|+c|\mathscr{B}|\leq \max\left\{\binom{n}{k}-\binom{n-r}{k}+c\binom{n-r}{l-r},\  \binom{n-1}{k-1}+c\binom{n-1}{l-1}\right\}$$
and the families $\mathscr{A}$ and $\mathscr{B}$ attaining the upper bound are also characterized.  This generalizes the corresponding result of Hilton and Milner for $c=1$ and $r=k=l$, and implies a result of Tokushige and the second author (Theorem \ref{thm 1.3}).
\vskip 0.3cm
\noindent{\bf Keywords:} finite set; cross-intersecting; non-empty family

\section{Introduction}
For a natural number $n$, we write $[n]=\{1,2,\cdots,n\}$ and denote by $2^{[n]}$ the power set of $[n]$. In particular, for integer $i>0$ we denote by $\binom{[n]}{i}$ the collection of all $i$-subsets of $[n]$. Every subset  of $2^{[n]}$ is called a {\it family}. We call a family $\mathscr{A}$ {\it intersecting} if $A\cap B\neq \emptyset$ for any $A, B\in\mathscr{A}$, and call $t$ ($t\geq2$) families $\mathscr{A}_1,\mathscr{A}_2,\cdots, \mathscr{A}_t$ {\it cross-intersecting} if $A_i\cap A_j\neq \emptyset$ for any $A_i\in \mathscr{A}_i$ and $A_j\in \mathscr{A}_j$ with $i\neq j$. For a family $A\in 2^{[n]}$, we define its {\it complement} as usual by $\overline{A}=[n]\setminus A$ and, for a family $\mathscr{A}\subset 2^{[n]}$, we denote $\overline{\mathscr{A}}=\{\overline{A}:A\in\mathscr{A}\}$.

The following theorem, known as Erd\H{o}s-Ko-Rado theorem, is a fundamental result in extremal set theory.
\begin{thm}\label{thm 1.1} (Erd\H{o}s-Ko-Rado,\cite{E}). For two positive integers $n$ and $k$, if $n\geq 2k$ and
$\mathscr{A}\subset \binom{[n]}{k}$ is an intersecting family, then
\begin{align*}
|\mathscr{A}| \leq \binom{n-1}{k-1}.
\end{align*}
\end{thm}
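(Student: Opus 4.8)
The plan is to prove this via Katona's cyclic permutation (averaging) method, which yields the bound $\binom{n-1}{k-1}$ directly and cleanly precisely when $n\geq 2k$. First I would set up the averaging framework: consider all $n!$ ways to place the elements of $[n]$ on $n$ labelled positions around a circle, and call a $k$-set an \emph{arc} of a given placement if its elements occupy $k$ consecutive positions. The heart of the argument is a local lemma asserting that, for any single cyclic placement, an intersecting family can contain at most $k$ of the $n$ arcs of length $k$; this local count is exactly the input that becomes $\binom{n-1}{k-1}$ after averaging.

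To prove the local lemma I would fix a cyclic placement and label the $n$ arcs $A_0,A_1,\dots,A_{n-1}$, where $A_i$ consists of the positions $i,i+1,\dots,i+k-1$ read modulo $n$. Assuming $A_0$ lies in the family, I would observe that an arc $A_j$ meets $A_0$ precisely when $j\in\{-(k-1),\dots,k-1\}\pmod n$, and that these $2k-1$ indices are distinct because $n\geq 2k$. I would then pair the $2(k-1)$ arcs other than $A_0$ as $\{A_i,A_{i-k}\}$ for $1\leq i\leq k-1$: each such pair consists of two blocks of $k$ consecutive positions that are disjoint from one another, yet both meet $A_0$. Since the family is intersecting, at most one arc from each of these $k-1$ disjoint pairs can belong to it, so together with $A_0$ at most $k$ arcs survive.

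With the local lemma in hand I would finish by double counting the incidences between members of $\mathscr{A}$ and cyclic placements. On one side, each fixed $A\in\mathscr{A}$ is an arc in exactly $n\,k!\,(n-k)!$ of the placements. On the other side, each of the $n!$ placements contributes at most $k$ members of $\mathscr{A}$ by the local lemma. Comparing the two counts gives $|\mathscr{A}|\cdot n\,k!\,(n-k)!\leq k\cdot n!$, which rearranges to $|\mathscr{A}|\leq\binom{n-1}{k-1}$, as desired.

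I expect the main obstacle to be the disjointness bookkeeping in the local lemma: verifying that the pairing $\{A_i,A_{i-k}\}$ exhausts the arcs meeting $A_0$, that both members of each pair genuinely meet $A_0$, and that the hypothesis $n\geq 2k$ is exactly what prevents the two blocks of a pair from wrapping around the circle to overlap. An alternative route, should the averaging constants prove awkward to track, is the shifting (compression) method: one repeatedly applies the shift operators, which preserve both $|\mathscr{A}|$ and the intersecting property, until $\mathscr{A}$ is compressed, and then analyzes the resulting canonical family directly.
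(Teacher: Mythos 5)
Your proposal is correct, but note that the paper itself contains no proof of this statement: Theorem \ref{thm 1.1} is quoted as the classical Erd\H{o}s--Ko--Rado theorem with a citation to \cite{E}, and it is used later purely as a black box (in the proof of Corollary \ref{thm 1.7}, to bound $|\mathscr{B}_2|$). So there is no in-paper argument to match; what you have supplied is Katona's cyclic-permutation proof, and it is sound. The local lemma is handled correctly: the arcs meeting $A_0$ are exactly the $A_j$ with $j\equiv -(k-1),\dots,k-1 \pmod n$; your pairs $\{A_i,A_{i-k}\}$ for $1\leq i\leq k-1$ exhaust the $2(k-1)$ such arcs other than $A_0$; and the two arcs of a pair occupy $2k$ consecutive positions, which are distinct modulo $n$ precisely because $n\geq 2k$ --- that disjointness is indeed the only place the hypothesis bites, since distinctness of the $2k-1$ indices alone would already follow from $n\geq 2k-1$. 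The double counting is also right: a fixed $k$-set is an arc of exactly $n\,k!\,(n-k)!$ of the $n!$ placements, and $k\cdot n!\big/\bigl(n\,k!\,(n-k)!\bigr)=\frac{k}{n}\binom{n}{k}=\binom{n-1}{k-1}$. Two trivial points worth making explicit in a write-up: the local lemma should cover the degenerate case in which a placement contains no arc of $\mathscr{A}$ at all (the count $0\leq k$ is then vacuous), and in the pairing step one should state why arcs disjoint from $A_0$ cannot belong to the family (they fail to intersect $A_0\in\mathscr{A}$, using that some arc was assumed to lie in the family). Your fallback route via shifting would also work, and is in fact closer in spirit to the compression machinery (Kruskal--Katona, lexicographically initial families) that the paper actually deploys for its main theorem.
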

The Erd\H{o}s-Ko-Rado theorem has a large number of variations and generalizations, see \cite{A,F0,Furedi,H1,G,Kwan} for examples. A natural direction is to extend the notion of an intersecting family to a class of cross-intersecting families. Notice that if $\mathscr{A}_1=\mathscr{A}_2=\cdots= \mathscr{A}_t$ ($t\geq 2$), then the families $\mathscr{A}_1,\mathscr{A}_2,\cdots, \mathscr{A}_t$ are cross-intersecting if and only if $\mathscr{A}_1$ is intersecting. In this sense, the notion of cross-intersecting for families is indeed a generalization of that of intersecting for a family. The following result was proved by Hilton, a simple proof was given later by Borg \cite{B}.

\begin{thm}\label{HB} (Hilton, \cite{H1}) Let $n,k$ and $t$ be positive integers with $n\geq 2k$ and $t\geq 2$. If $\mathscr{A}_1,\mathscr{A}_2,\cdots, \mathscr{A}_t\subset \binom{[n]}{k}$ are cross-intersecting families, then
\begin{equation*}
\sum\limits_{i=1}^t|\mathscr{A}_i|\leq
\left\{
\begin{aligned}
&\binom{n}{k},&\  \mbox{if}\ t\leq\frac{n}{k};\\
&t\binom{n-1}{k-1},& \  \mbox{if}\ t\geq\frac{n}{k}.
\end{aligned}
\right.
\end{equation*}
\end{thm}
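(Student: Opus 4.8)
The plan is to prove Theorem~\ref{HB} by Katona's cyclic permutation (circle) method, reducing the global inequality to a single inequality about arcs on a cycle. First I would fix a cyclic ordering $\pi$ of $[n]$ and consider its $n$ \emph{arcs} of length $k$, i.e.\ the $k$-sets consisting of $k$ cyclically consecutive elements. Writing $c_i(\pi)$ for the number of arcs of $\pi$ lying in $\mathscr{A}_i$, a standard double count gives $\sum_\pi c_i(\pi)=k!\,(n-k)!\,|\mathscr{A}_i|$, since a fixed $k$-set occurs as an arc in exactly $k!\,(n-k)!$ of the $(n-1)!$ cyclic orders. Summing over $i$, the theorem will follow at once from the per-cycle bound
\[
\sum_{i=1}^{t} c_i(\pi)\ \le\ \max\{n,\ tk\}\qquad\text{for every }\pi,
\]
because summing this over the $(n-1)!$ cyclic orders and dividing by $k!\,(n-k)!$ turns $n$ into $\binom{n}{k}$ and $tk$ into $t\binom{n-1}{k-1}$.

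So the whole problem reduces to the following \emph{circle lemma}: if $\mathscr{C}_1,\dots,\mathscr{C}_t$ are cross-intersecting families of $k$-arcs on an $n$-cycle with $n\ge 2k$, then $\sum_i|\mathscr{C}_i|\le\max\{n,tk\}$. To attack it I would identify each arc with its starting position, so that $\mathscr{C}_i$ becomes a subset $S_i\subseteq\mathbb{Z}_n$, and two arcs intersect precisely when their positions are at cyclic distance at most $k-1$ (this is where $n\ge 2k$ is used). Introducing the multiplicity $d(p)=|\{i:p\in S_i\}|$, the target is $\sum_{p\in\mathbb{Z}_n}d(p)\le\max\{n,tk\}$. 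The cross-intersecting hypothesis translates into the crucial local constraint: if two positions $p\ne q$ lie at cyclic distance $\ge k$ and are both used, then they belong to one and the same family and to no other, so $d(p)=d(q)=1$ with a common index.

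With this constraint I would run a dichotomy. If every used position lies within a single window of $k$ consecutive positions, then at most $k$ positions are used, each with $d\le t$, giving $\sum_p d(p)\le tk$. Otherwise the used positions are spread out, and I would argue that the total multiplicity cannot exceed $n$: the positions carrying multiplicity $\ge 2$ must be pairwise within distance $k-1$ and within distance $k-1$ of every used position, pinning them to a short central block, while every position outside this block carries multiplicity $1$ and is forced, through the distance-$\ge k$ pairs, into a single common family; combining these facts caps the sum at $n$. Tracking equality throughout both branches --- one full family versus $t$ copies of a common $k$-window --- should yield the extremal configurations and, after pulling back through the averaging, the characterization of the extremal $\mathscr{A}_i$.

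The main obstacle is the ``spread-out'' branch of the circle lemma: controlling the interaction between a few central positions of high multiplicity and the many peripheral positions of multiplicity one, and showing their combined contribution never beats $n$, requires a careful case analysis of how the distance-$\ge k$ pairs link the peripheral positions into a single family. Everything else --- the double count, the arithmetic converting $\max\{n,tk\}$ into the stated right-hand side, and the ``clustered'' branch --- is routine. A shifting/compression argument applied to the $\mathscr{A}_i$ directly, combined with Theorem~\ref{thm 1.1}, is a plausible alternative route, but I expect the same spread-out structure to reappear there as the real difficulty.
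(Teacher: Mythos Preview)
The paper does not prove Theorem~\ref{HB}; it is quoted as background, attributed to Hilton, with a short proof credited to Borg. The nearest thing in the paper is Corollary~\ref{thm 1.7}, a strengthening for \emph{non-empty} families, proved by passing via Kruskal--Katona to $L$-initial (hence nested) families and then invoking Erd\H{o}s--Ko--Rado together with Theorem~\ref{mainthm}; Theorem~\ref{HB} follows from that corollary by the trivial remark that if some $\mathscr{A}_i=\emptyset$ the bound is immediate. So your circle-method plan is a genuinely different route from anything the paper does.

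Your double-counting reduction to the per-cycle inequality $\sum_i c_i(\pi)\le\max\{n,tk\}$ is correct, and that inequality is true. The gap is in your dichotomy for proving it: the claim that the ``spread-out'' branch forces $\sum_p d(p)\le n$ is false. Take $n=4$, $k=2$, $t=3$, with $S_1=\{3,0,1\}$ and $S_2=S_3=\{0\}$ as arc positions in $\mathbb{Z}_4$. These are cross-intersecting; the used positions $\{3,0,1\}$ do not fit in any window of $k=2$ consecutive positions, so you are in the spread-out branch; yet $\sum_p d(p)=1+3+1=5>4=n$. Your own bookkeeping shows why: the high-multiplicity block $H=\{p:d(p)\ge2\}$ contributes up to $t|H|$ and the periphery at most $|U\setminus H|$, giving only $\sum_p d(p)\le n+(t-1)|H|$, which is useless once $H\neq\emptyset$ and $t\ge2$. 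The correct target in this branch is still $\max\{n,tk\}$, not $n$; one clean reorganisation is to first dispose of the case where some $S_i=\emptyset$ (then $\sum\le n$ trivially or by induction on $t$) and, when all $S_i$ are non-empty, prove the sharper bound $\sum_i|S_i|\le tk$ --- but that last step needs a genuinely different argument from isolating a single far-apart pair.
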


For $t=2$, lots of variations of Theorem \ref{HB} were also considered in the literature by imposing some particular restrictions on the families, e.g., the Sperner type restriction \cite{Wong}, $r$-intersecting restriction \cite{F1,W} and non-empty restriction \cite{F1,H3,W}. For non-empty restriction,  Hilton and Milner gave the following result:
\begin{thm}\label{HM} (Hilton and Milner, \cite{H3}) Let $n$ and $k$ be two positive integers with $n\geq 2k$ and $\mathscr{A},\mathscr{B}\subseteq \binom{[n]}{k}$. If
$\mathscr{A}$ and $\mathscr{B}$ are non-empty cross-intersecting, then
\begin{align*}
|\mathscr{A}|+|\mathscr{B}|\leq \binom{n}{k}-\binom{n-k}{k}+1.
\end{align*}
\end{thm}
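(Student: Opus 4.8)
The plan is to pin down the extremal pair and then reduce the inequality to a single shadow estimate. The bound is attained by taking $\mathscr{B}=\{B_0\}$ a single $k$-set together with $\mathscr{A}=\{A\in\binom{[n]}{k}:A\cap B_0\neq\emptyset\}$, which gives $|\mathscr{A}|+|\mathscr{B}|=\binom{n}{k}-\binom{n-k}{k}+1$ (and, by symmetry, the same pair with the roles of $\mathscr{A},\mathscr{B}$ exchanged). This immediately settles the case $|\mathscr{B}|=1$: fixing any $B_0\in\mathscr{B}$, cross-intersection forces $\mathscr{A}\subseteq\{A:A\cap B_0\neq\emptyset\}$, whence $|\mathscr{A}|\leq\binom{n}{k}-\binom{n-k}{k}$ and we are done. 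Since both the quantity $|\mathscr{A}|+|\mathscr{B}|$ and the hypotheses are symmetric in $\mathscr{A}$ and $\mathscr{B}$, I may assume $|\mathscr{A}|\geq|\mathscr{B}|\geq 2$; combined with Theorem~\ref{HB} for $t=2$ (so that $|\mathscr{A}|+|\mathscr{B}|\leq\binom{n}{k}$) this confines $\mathscr{B}$ to the range $2\leq|\mathscr{B}|\leq\tfrac12\binom{n}{k}$.

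For the main case I would pass to complements. Let $\Delta$ denote the family of those $A\in\binom{[n]}{k}$ that are contained in some $\overline{B}$ with $B\in\mathscr{B}$, i.e.\ the $k$-th shadow of $\overline{\mathscr{B}}\subseteq\binom{[n]}{n-k}$. Since $A\cap B=\emptyset$ is equivalent to $A\subseteq\overline{B}$, the cross-intersecting condition is exactly $\mathscr{A}\cap\Delta=\emptyset$, so $|\mathscr{A}|\leq\binom{n}{k}-|\Delta|$ and the theorem reduces to the single shadow inequality
\[
|\Delta|\;\geq\;|\mathscr{B}|+\binom{n-k}{k}-1 .
\]
The natural tool is the Kruskal--Katona theorem: writing $|\overline{\mathscr{B}}|=\binom{x}{\,n-k\,}$ for a real $x\geq n-k$, it yields $|\Delta|\geq\binom{x}{k}$, so it would suffice to bound $\binom{x}{k}-\binom{x}{\,n-k\,}$ below by $\binom{n-k}{k}-1$.

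The hard part is precisely this last estimate, and it is where the non-emptiness of $\mathscr{A}$ must enter. The continuous function $\binom{x}{k}-\binom{x}{\,n-k\,}$ equals $\binom{n-k}{k}-1$ at $x=n-k$ but decreases to $0$ as $x\to n$, so a bare substitution into Kruskal--Katona is not enough; one must rule out the degenerate large-$\mathscr{B}$ regime. This is exactly what $\mathscr{A}\neq\emptyset$ provides: any $A_0\in\mathscr{A}$ satisfies $A_0\not\subseteq\overline{B}$ for every $B$, i.e.\ $\overline{\mathscr{B}}$ contains no superset of the fixed $k$-set $A_0$ (equivalently $\mathscr{B}\subseteq\binom{[n]}{k}\setminus\binom{\overline{A_0}}{k}$), which already forces $|\mathscr{B}|\leq\binom{n}{k}-\binom{n-k}{k}$. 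I would therefore establish the shadow inequality by running Kruskal--Katona on the colex-initial extremal family while tracking integrality, or---more convenient for the equality discussion---by inducting on $n$ after a simultaneous $(i,j)$-shift of $\mathscr{A}$ and $\mathscr{B}$, which preserves uniformity, cardinalities, cross-intersection and non-emptiness and reduces matters to shifted families on which $\overline{\mathscr{B}}$ avoiding all supersets of $A_0$ can be analysed level by level. Proving this ``shadow avoiding a full sub-shadow'' bound, with its equality cases, is the genuine obstacle; once it is in hand the stated inequality follows, and the characterisation of the extremal pairs drops out of the equality conditions in Kruskal--Katona.
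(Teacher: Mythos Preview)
Your reduction is sound and in fact coincides with the paper's framework: your $\Delta$ is exactly the family $\mathscr{D}_k(\mathscr{B})$ the paper works with, and the identity $|\mathscr{A}|\le\binom{n}{k}-|\mathscr{D}_k(\mathscr{B})|$ for a maximal pair is precisely how the paper organises the count. The difficulty is also located correctly: the Lov\'asz form of Kruskal--Katona does \emph{not} give $|\Delta|-|\mathscr{B}|\ge\binom{n-k}{k}-1$ on the nose, because $\binom{x}{k}-\binom{x}{n-k}$ drops below $\binom{n-k}{k}-1$ well inside the admissible range (indeed this difference is exactly $\binom{n-k}{k}-1$ already at $|\mathscr{B}|=\binom{n-1}{k-1}$, so there is no slack to spare).

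The genuine gap is that you do not actually prove this key shadow inequality. You yourself flag it as ``the genuine obstacle'' and then offer two programmes (cascade Kruskal--Katona ``tracking integrality'', or shifting followed by induction on $n$) without executing either; the phrase ``$\overline{\mathscr{B}}$ avoiding all supersets of $A_0$ can be analysed level by level'' is not a proof, and neither your bound $|\mathscr{B}|\le\tfrac12\binom{n}{k}$ from Theorem~\ref{HB} nor the bound $|\mathscr{B}|\le\binom{n}{k}-\binom{n-k}{k}$ from $\mathscr{A}\neq\emptyset$ is strong enough by itself to close the Lov\'asz gap. The paper fills exactly this hole by a different and sharper device: after passing to $L$-initial families it picks the minimal $s$ with $\mathscr{P}^{(l)}_s\subseteq\mathscr{B}\subset\mathscr{P}^{(l)}_{s-1}$, which forces $\mathscr{R}^{(k)}_{s-1}\subseteq\mathscr{A}\subseteq\mathscr{R}^{(k)}_s$, strips off these common parts, and observes that the residual pair sits as an independent set in a \emph{biregular} bipartite ``disjointness'' graph on $\binom{[s+1,n]}{k-1}\times\binom{[s+1,n]}{l-s+1}$. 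A one-line counting lemma (Lemma~\ref{bigraph}) then gives $|\mathscr{A}_0|+|\mathscr{B}_0|\le\max\{|X|,|Y|\}$, which collapses the problem to comparing the finitely many ``pure'' pairs $(\mathscr{R}^{(k)}_i,\mathscr{P}^{(l)}_i)$; a short log-concavity computation shows the maximum occurs at an endpoint $i\in\{1,r\}$, and one final binomial inequality (the paper's (\ref{12})) finishes. None of your suggested routes reaches this structure, and without it the argument is incomplete.
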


In this paper we focus on non-empty cross-intersecting families. Inspired by Theorem \ref{HM}, we prove the following generalization of it.

\begin{thm}\label{mainthm} Let $n,k,l,r$ be any integers with $n\geq k+l, l\geq r\geq 1$, $c$ be a  positive constant  and $\mathscr{A}\subseteq \binom{[n]}{k},\mathscr{B}\subseteq \binom{[n]}{l}$. If $\mathscr{A}$ and $\mathscr{B}$ are cross-intersecting and $\binom{n-r}{l-r}\leq|\mathscr{B}|\leq \binom{n-1}{l-1}$, then
\begin{equation}\label{main2}
|\mathscr{A}|+c|\mathscr{B}|\leq \max\left\{\binom{n}{k}-\binom{n-r}{k}+c\binom{n-r}{l-r},\  \binom{n-1}{k-1}+c\binom{n-1}{l-1}\right\}
\end{equation}
and the upper bound is attained if and only if one of the following holds:\\
{\bf(i).} \begin{equation}\label{iff}
\binom{n}{k}-\binom{n-r}{k}+c\binom{n-r}{l-r}\geq\binom{n-1}{k-1}+c\binom{n-1}{l-1},
\end{equation}
\hspace{9mm}$n>k+l,\mathscr{A}=\{A\in \binom{[n]}{k}:[r]\cap A\neq \emptyset\},\mathscr{B}=\{B\in\binom{[n]}{l}:[r]\subseteq B\}$;\\
{\bf(ii).} The `$\geq$' in (\ref{iff}) is `$\leq$', $n>k+l$, $\mathscr{A}=\{A\in \binom{[n]}{k}:1\in A\},\mathscr{B}=\{B\in \binom{[n]}{l}:1\in B\}$;\\
{\bf(iii).} $n=k+l,c< 1$,  $\mathscr{B}\subset\binom{[n]}{l}$ with $|\mathscr{B}|=\binom{n-r}{l-r},\mathscr{A}=\binom{[n]}{k}\setminus\overline{\mathscr{B}}$;\\
{\bf(iv).}  $n=k+l,c=1$, $\mathscr{B}\subset\binom{[n]}{l}$ with $\binom{n-r}{l-r}\leq|\mathscr{B}|\leq\binom{n-1}{l-1},\mathscr{A}=\binom{[n]}{k}\setminus\overline{\mathscr{B}}$;\\
{\bf(v).} \ $n=k+l,c> 1$,  $\mathscr{B}\subset\binom{[n]}{l}$ with $|\mathscr{B}|=\binom{n-1}{l-1},\mathscr{A}=\binom{[n]}{k}\setminus\overline{\mathscr{B}}$.
\end{thm}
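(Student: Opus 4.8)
The plan is to pass to complements and convert the cross-intersecting condition into a shadow condition, so that the whole problem reduces to a one-variable optimisation governed by the Kruskal--Katona theorem. Write $\mathscr{D}=\overline{\mathscr{B}}\subseteq\binom{[n]}{n-l}$, so that $|\mathscr{D}|=|\mathscr{B}|$ and $n-l\geq k$. For $A\in\binom{[n]}{k}$ and $B\in\mathscr{B}$ we have $A\cap B=\emptyset$ iff $A\subseteq\overline{B}$; hence the cross-intersecting property is exactly the statement that $\mathscr{A}$ is disjoint from the $k$-th shadow $\Delta_k(\mathscr{D})=\{A\in\binom{[n]}{k}:A\subseteq D\text{ for some }D\in\mathscr{D}\}$. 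Therefore
$$|\mathscr{A}|\leq\binom{n}{k}-|\Delta_k(\mathscr{D})|,$$
with equality iff $\mathscr{A}=\binom{[n]}{k}\setminus\Delta_k(\mathscr{D})$. When $n=k+l$ we have $n-l=k$, so $\Delta_k(\mathscr{D})=\mathscr{D}=\overline{\mathscr{B}}$ and $|\mathscr{A}|+c|\mathscr{B}|\leq\binom{n}{k}+(c-1)|\mathscr{B}|$; maximising the right-hand side over $|\mathscr{B}|\in[\binom{n-r}{l-r},\binom{n-1}{l-1}]$ is immediate from the sign of $c-1$ and yields precisely the (otherwise unconstrained) extremal families of cases (iii)--(v), where $\mathscr{B}$ is free and $\mathscr{A}=\binom{[n]}{k}\setminus\overline{\mathscr{B}}$.

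For $n>k+l$ I would invoke the Lov\'asz form of the Kruskal--Katona theorem: if $|\mathscr{D}|=\binom{x}{n-l}$ for a real $x\geq n-l$, then $|\Delta_k(\mathscr{D})|\geq\binom{x}{k}$. Since $\binom{n-r}{l-r}=\binom{n-r}{n-l}$ and $\binom{n-1}{l-1}=\binom{n-1}{n-l}$, the hypothesis $\binom{n-r}{l-r}\leq|\mathscr{B}|\leq\binom{n-1}{l-1}$ means exactly that $x\in[n-r,n-1]$. Combining the two displayed facts with $c|\mathscr{B}|=c\binom{x}{n-l}$ gives
$$|\mathscr{A}|+c|\mathscr{B}|\leq F(x):=\binom{n}{k}-\binom{x}{k}+c\binom{x}{n-l},\qquad x\in[n-r,n-1],$$
and the two candidate quantities on the right of (\ref{main2}) are precisely $F(n-r)$ and $F(n-1)$. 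It then remains to prove that $F(x)\leq\max\{F(n-r),F(n-1)\}$ throughout the interval.

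The crux—and the step I expect to be the main obstacle—is this last inequality. Setting $G(x)=\binom{x}{k}-c\binom{x}{n-l}=\binom{n}{k}-F(x)$, I must show that $G$ attains its minimum over $[n-r,n-1]$ at an endpoint, for which it suffices that $G$ be strictly quasiconcave (unimodal, increasing then decreasing) there. Since $\binom{x}{k}$ and $\binom{x}{n-l}$ are positive with positive derivatives on $x>n-l-1\supseteq[n-r,n-1]$, one has $\operatorname{sign}G'(x)=\operatorname{sign}(q(x)-c)$ with $q(x)=(\binom{x}{k})'/(\binom{x}{n-l})'$; thus the task reduces to proving that $q$ is strictly decreasing on $[n-r,n-1]$, for then $q-c$ changes sign at most once, from $+$ to $-$, making $G$ unimodal. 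Writing $q(x)=\frac{\binom{x}{k}}{\binom{x}{n-l}}\cdot\frac{\sum_{i=0}^{k-1}(x-i)^{-1}}{\sum_{i=0}^{n-l-1}(x-i)^{-1}}$, the first factor is strictly decreasing while the second may increase, so the genuine content is to show the first dominates; I would establish this by a logarithmic-derivative estimate or an induction on $n-l-k$. Granting it, the minimum of $G$ on $[n-r,n-1]$ equals $\min\{G(n-r),G(n-1)\}$ and is attained only at the corresponding endpoint, which proves (\ref{main2}).

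For the equality analysis when $n>k+l$, all inequalities above must be tight at once. Strict quasiconcavity forces $x$ to be the endpoint selected by the comparison (\ref{iff}); equality in Kruskal--Katona at a size exactly $\binom{n-1}{n-l}$ or $\binom{n-r}{n-l}$ forces, by the characterisation of shadow-minimal families, that $\mathscr{D}=\overline{\mathscr{B}}$ is a full clique $\binom{Y}{n-l}$ with $|Y|=n-1$ or $|Y|=n-r$; and equality in the first step forces $\mathscr{A}=\binom{[n]}{k}\setminus\Delta_k(\mathscr{D})$. Translating back, $|Y|=n-1$ gives $\mathscr{B}=\{B:1\in B\}$ and $\mathscr{A}=\{A:1\in A\}$, i.e.\ case (ii), while $|Y|=n-r$ gives $\mathscr{B}=\{B:[r]\subseteq B\}$ and $\mathscr{A}=\{A:[r]\cap A\neq\emptyset\}$, i.e.\ case (i); the boundary $F(n-r)=F(n-1)$ of (\ref{iff}) is exactly where both configurations occur. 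The fiddly points I anticipate here are the careful bookkeeping of the Kruskal--Katona equality cases (ruling out spurious shadow-minimal families at these particular sizes) and verifying that no interior point of $[n-r,n-1]$ ties the winning endpoint.
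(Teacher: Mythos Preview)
Your route via complements and the Lov\'asz form of Kruskal--Katona is genuinely different from the paper's. The paper does not pass to a real parameter $x$: it reduces to $L$-initial families, sandwiches $(\mathscr{A},\mathscr{B})$ between consecutive ``pure'' pairs $(\mathscr{R}^{(k)}_{s-1},\mathscr{P}^{(l)}_{s-1})$ and $(\mathscr{R}^{(k)}_s,\mathscr{P}^{(l)}_s)$, and uses a short biregular-bipartite independence lemma to snap the optimum to one of these; the remaining \emph{discrete} optimisation over $i\in\{1,\ldots,r\}$ is then dispatched in three lines by multiplying the two ``local maximum'' inequalities to force $n\le k+l$. In effect the paper proves exactly the integer-valued version of your endpoint claim, namely that $i\mapsto|\mathscr{R}^{(k)}_i|+c|\mathscr{P}^{(l)}_i|=F(n-i)$ has no interior maximum over $\{1,\ldots,r\}$, and this suffices once one is working with $L$-initial families.

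The substantive gap in your proposal is precisely the step you flag: the strict monotonicity of $q(x)=\binom{x}{k}'/\binom{x}{n-l}'$ on $[n-r,n-1]$. You correctly note that the factor $\binom{x}{k}/\binom{x}{n-l}$ is decreasing while the ratio of logarithmic derivatives $\psi_k/\psi_m$ (with $\psi_j=\sum_{i<j}(x-i)^{-1}$) is \emph{increasing}, so the sign of $q'$ is a genuine competition; ``a logarithmic-derivative estimate or an induction on $n-l-k$'' is a hope, not a proof, and I do not see a standard lemma that delivers it. By contrast, the paper's discrete argument replaces this entire analytic difficulty with an elementary manipulation. For the equality characterisation you and the paper are on the same footing: both rely on the F\"uredi--Griggs/M\"ors uniqueness theorem (the paper's Proposition~2.3) that a family of size $\binom{a}{n-l}$ with shadow exactly $\binom{a}{k}$ must be a clique $\binom{Y}{n-l}$ --- so your equality sketch is fine, but be aware it is this external result, not ``the characterisation of shadow-minimal families'' in general, that does the work.
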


The following result is a simple consequence of Theorem \ref{mainthm}. It provides another generalization of Theorem \ref{HM} by extending two families to arbitrary number of families and it is also sharpening of Theorem \ref{HB}.
\begin{cor}\label{thm 1.7} Let $n,k$ and $t$ be positive integers with $n\geq 2k$ and $t\geq 2$. If $\mathscr{A}_1,\mathscr{A}_2,\cdots,\mathscr{A}_t$ $\subseteq \binom{[n]}{k}$ are non-empty cross-intersecting families, then
\begin{equation}\label{main}
\sum\limits_{i=1}^t|\mathscr{A}_i|\leq \max\left\{\binom{n}{k}-\binom{n-k}{k}+t-1,\  t\binom{n-1}{k-1}\right\}.
\end{equation}
and the upper bound is sharp.
\end{cor}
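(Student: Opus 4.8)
The plan is to deduce the inequality from Theorem \ref{mainthm} by specialising its parameters and then running a simple averaging over pairs. First I would set $l=r=k$ and $c=t-1$ in Theorem \ref{mainthm}. Since $n\ge 2k=k+l$ and $k\ge 1$, the hypotheses are met, and because $\binom{n-r}{l-r}=\binom{n-k}{0}=1$ the size restriction on the second family becomes simply $1\le|\mathscr{B}|\le\binom{n-1}{k-1}$, i.e.\ non-emptiness together with an upper bound. A direct substitution shows that the right-hand side of (\ref{main2}) collapses to exactly
$$M:=\max\left\{\binom{n}{k}-\binom{n-k}{k}+(t-1),\ t\binom{n-1}{k-1}\right\},$$
which is the quantity appearing in (\ref{main}). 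Thus for any non-empty cross-intersecting $\mathscr{A},\mathscr{B}\subseteq\binom{[n]}{k}$ with $|\mathscr{B}|\le\binom{n-1}{k-1}$ one has $|\mathscr{A}|+(t-1)|\mathscr{B}|\le M$.

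Next I would relabel the families so that $\mathscr{A}_1$ has maximum cardinality. If $|\mathscr{A}_1|\le\binom{n-1}{k-1}$, then every $|\mathscr{A}_i|\le\binom{n-1}{k-1}$ and the trivial estimate $\sum_{i}|\mathscr{A}_i|\le t\binom{n-1}{k-1}\le M$ already finishes the proof. Otherwise $|\mathscr{A}_1|>\binom{n-1}{k-1}$, and here I would invoke the product inequality for cross-intersecting families (namely $|\mathscr{A}||\mathscr{B}|\le\binom{n-1}{k-1}^2$ when $n\ge 2k$) to conclude that no second family can exceed $\binom{n-1}{k-1}$; hence $|\mathscr{A}_i|\le\binom{n-1}{k-1}$ for every $i\ge 2$. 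This is the one external ingredient, and verifying that the size hypothesis of Theorem \ref{mainthm} is genuinely available for each pair $(\mathscr{A}_1,\mathscr{A}_i)$ is the only real obstacle in the argument; everything else is bookkeeping.

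With the size condition secured, I would apply the specialised inequality to each pair $(\mathscr{A}_1,\mathscr{A}_i)$ for $i=2,\dots,t$, obtaining $|\mathscr{A}_1|+(t-1)|\mathscr{A}_i|\le M$. Summing these $t-1$ inequalities gives $(t-1)|\mathscr{A}_1|+(t-1)\sum_{i=2}^{t}|\mathscr{A}_i|\le (t-1)M$, and dividing by $t-1>0$ yields precisely $\sum_{i=1}^{t}|\mathscr{A}_i|\le M$, which is (\ref{main}). The choice $c=t-1$ is exactly what makes the coefficient of $|\mathscr{A}_1|$ equal to $1$ after averaging, so that the large family is counted once rather than $t-1$ times; this is the crux of the reduction. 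Finally, for sharpness I would exhibit the two extremal configurations: taking $\mathscr{A}_1=\{A\in\binom{[n]}{k}:A\cap[k]\neq\emptyset\}$ and $\mathscr{A}_2=\cdots=\mathscr{A}_t=\{[k]\}$ attains the first term of $M$, while taking $\mathscr{A}_1=\cdots=\mathscr{A}_t=\{A\in\binom{[n]}{k}:1\in A\}$ attains the second, so the bound in (\ref{main}) cannot be improved.
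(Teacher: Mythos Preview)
Your argument is correct and rests on the same specialisation of Theorem~\ref{mainthm} as the paper (namely $l=r=k$ and $c=t-1$), but the mechanics differ in two places. To make the size hypothesis $|\mathscr{B}|\le\binom{n-1}{k-1}$ available, the paper first passes to the $L$-initial families $\mathscr{B}_i=(\mathscr{A}_i)_L$; these are then \emph{nested}, so $\mathscr{B}_2$ is intersecting and the Erd\H{o}s--Ko--Rado theorem gives $|\mathscr{B}_2|\le\binom{n-1}{k-1}$ directly. You instead stay with the original families and import the product bound $|\mathscr{A}||\mathscr{B}|\le\binom{n-1}{k-1}^2$, which is an external result not stated in the paper. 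Second, because the $\mathscr{B}_i$ are nested the paper has $\sum_i|\mathscr{B}_i|\le|\mathscr{B}_1|+(t-1)|\mathscr{B}_2|$ and needs only a \emph{single} application of Theorem~\ref{mainthm} to the pair $(\mathscr{B}_1,\mathscr{B}_2)$, whereas you apply it $t-1$ times and average. Both routes are valid; the paper's is a bit more self-contained (it uses only Kruskal--Katona and EKR, already present in the text), while your averaging trick is a pleasant way to avoid the $L$-initial reduction at the cost of citing one more lemma. Your sharpness examples coincide with the paper's extremal pairs from Theorem~\ref{mainthm}~(i)--(ii).
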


\section{Proof of Theorem \ref{mainthm} and Corollary \ref{thm 1.7}}

{\bf Proof of Theorem \ref{mainthm}}

Let $\prec_L$, or $\prec $  for short, be the lexicographic order on $\binom{[n]}{i}$ where $i\in\{1,2,\cdots,n\}$, that is, for any two sets $A,B\in\binom{[n]}{i}$,  $A\prec B$  if and only if  $\min\{a:a\in A\setminus B\}<\min\{b:b\in B\setminus A\}$. For a family $\mathscr{A}\subseteq\binom{[n]}{k}$, let $\mathscr{A}_{L}$ denote the family consisting of the first $|\mathscr{A}|$ $k$-sets in order $\prec$, and call $\mathscr{A}$ $L$-{\it initial} if $\mathscr{A}_L=\mathscr{A}$.

In our forthcoming argument,  the well-known Kruskal-Katona theorem  \cite{K1,K2} will play a key role, an equivalent formulation of which was given in \cite{F3,H2} as follows:

\noindent {\bf Kruskal-Katona theorem.} {\it For $\mathscr{A}\in\binom{[n]}{k}$ and $\mathscr{B}\in\binom{[n]}{l}$, if $\mathscr{A}$ and $\mathscr{B}$ are cross-intersecting then $\mathscr{A}_{L}$ and $\mathscr{B}_{L}$  are cross-intersecting as well. }

For any $i\in\{1,2,\cdots,k\}$, let
$$\mathscr{P}^{(l)}_i=\left\{P\in \binom{[n]}{l}:P\supseteq[i]\right\}\ \ {\rm and}\ \ \mathscr{R}^{(k)}_i=\left\{R\in \binom{[n]}{k}:R\cap[i]\neq\emptyset\right\}.$$
\begin{lem}\label{RP} Let $n,k,l$ be any integers with $n\geq k+l$. For any $i\in\{1,2,\cdots,k\}$, $\mathscr{R}^{(k)}_{i}$ is the largest family that is cross-intersecting with $\mathscr{P}^{(l)}_{i}$ and, vice versa. Moreover,  $\mathscr{R}^{(k)}_{i}$ and  $\mathscr{P}^{(l)}_{i}$ are both $L$-initial.
\end{lem}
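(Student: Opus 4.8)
The plan is to verify the three assertions in turn: that $\mathscr{R}^{(k)}_i$ and $\mathscr{P}^{(l)}_i$ are cross-intersecting, that each is the largest family cross-intersecting with the other, and that each is $L$-initial. Throughout I take $i\le l$, which is exactly the range in which $\mathscr{P}^{(l)}_i\neq\emptyset$ (and is automatic wherever the lemma is applied, since there $i\le r\le l$). The cross-intersecting property is immediate: if $R\cap[i]\neq\emptyset$ and $[i]\subseteq P$, then any point of $R\cap[i]$ also lies in $P$, so $R\cap P\neq\emptyset$.

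For maximality I would argue the contrapositive in each direction, exhibiting a disjoint partner. If $R\in\binom{[n]}{k}$ satisfies $R\cap[i]=\emptyset$, then $[i]$ and $R$ are disjoint, so $[n]\setminus([i]\cup R)$ has $n-i-k$ elements; since $n\ge k+l$ gives $n-i-k\ge l-i\ge 0$, I can enlarge $[i]$ to some $P\in\mathscr{P}^{(l)}_i$ with $P\cap R=\emptyset$, so $R$ fails to be cross-intersecting with $\mathscr{P}^{(l)}_i$. Hence every $k$-family cross-intersecting $\mathscr{P}^{(l)}_i$ lies in $\mathscr{R}^{(k)}_i$, which proves $\mathscr{R}^{(k)}_i$ is largest. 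Dually, if $P\in\binom{[n]}{l}$ with $[i]\not\subseteq P$, pick $j\in[i]\setminus P$; since $|[n]\setminus P|=n-l\ge k$, I can choose a $k$-set $R\subseteq[n]\setminus P$ with $j\in R$, so that $R\in\mathscr{R}^{(k)}_i$ while $R\cap P=\emptyset$, showing $\mathscr{P}^{(l)}_i$ is largest. This step is the crux: it is the only place the hypothesis $n\ge k+l$ is used, and both counts $n-i-k\ge l-i$ and $n-l\ge k$ reduce precisely to it, so the sole care needed is to get these cardinalities right.

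For $L$-initiality I would show each family is an initial segment of $\prec$. For $\mathscr{P}^{(l)}_i$, suppose $[i]\subseteq P$ and $Q\prec P$ with $|Q|=l$ but $[i]\not\subseteq Q$; choosing $m\in[i]\setminus Q$ gives $m\in P\setminus Q$, so $\min\{y:y\in P\setminus Q\}\le m\le i$, while $Q\prec P$ forces $\min\{x:x\in Q\setminus P\}<\min\{y:y\in P\setminus Q\}\le i$, producing an $x\le i-1$ in $Q\setminus P$, i.e. $x\in[i]\subseteq P$, a contradiction. Thus $Q\supseteq[i]$ and $\mathscr{P}^{(l)}_i$ is $L$-initial. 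For $\mathscr{R}^{(k)}_i$ I would instead show its complement $\{S:\min S>i\}$ is a final segment: if $\min S>i$ and $S\prec T$ but $\min T\le i$, then $t:=\min T$ lies in $T\setminus S$ with $t\le i<\min S\le\min\{x:x\in S\setminus T\}$, whence $\min\{x:x\in T\setminus S\}\le t<\min\{x:x\in S\setminus T\}$, i.e. $T\prec S$, contradicting $S\prec T$. Hence $\mathscr{R}^{(k)}_i$ is an initial segment, i.e. $L$-initial, completing the three assertions. The only genuine obstacle is the maximality step; the $L$-initial claims are routine once one tracks the least element of the relevant set differences.
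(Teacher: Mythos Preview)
Your argument is correct and follows essentially the same route as the paper: the maximality of $\mathscr{R}^{(k)}_i$ is obtained exactly as in the paper by exhibiting, for any $k$-set $R$ disjoint from $[i]$, an $(l-i)$-set in $\{i+1,\dots,n\}\setminus R$ (the count $n-i-k\ge l-i$ is precisely the paper's use of $n\ge k+l$), and the dual direction and the $L$-initiality are what the paper dismisses as ``analogous'' and ``direct from the definitions''. Your version simply spells out the details (and wisely flags the implicit assumption $i\le l$), but the approach is the same.
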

\begin{proof} Assume that $A$ is a $k$-set that intersects every $l$-set in $\mathscr{P}^{(l)}_{i}$. Choose an arbitrary $(l-i)$-set $B$ from $\{i+1,i+2,\cdots,n\}\setminus A$ (such $B$ exists since $n\geq k+l$). Then $[i]\cup {B}\in\mathscr{P}^{(l)}_{i}$. Since $A\cap B=\emptyset$ and  $A$ intersects every $l$-set in $\mathscr{P}^{(l)}_{i}$, we must have ${A}\cap [i]\not=\emptyset$. Hence, $A\in \mathscr{R}^{(k)}_{i}$ and thus, $\mathscr{R}^{(k)}_{i}$ is largest. The reverse is analogous. Finally, the last part follows directly from the definitions of  $\mathscr{P}^{(l)}_{i}$ and $\mathscr{R}^{(k)}_{i}$.
\end{proof}

By the Kruskal-Katona theorem, when investigating the maximum of $|\mathscr{A}|+c|\mathscr{B}|$, we may assume that both $\mathscr{A}$ and $\mathscr{B}$ are $L$-initial families. Moreover, for given $\mathscr{B}$,  $\mathscr{A}$ is the largest family that is cross-intersecting with $\mathscr{B}$ and vice versa. That is,
$$\mathscr{A}=\left\{A\in \binom{[n]}{k}: A\cap B\not=\emptyset\ \ {\rm for\ all}\ B\in\mathscr{B}\right\},$$
$$\mathscr{B}=\left\{B\in \binom{[n]}{l}: A\cap B\not=\emptyset\ \ {\rm for\ all}\ A\in\mathscr{A}\right\}.$$

We call such a pair $(\mathscr{A},\mathscr{B})$ a {\it maximal pair}.}  Hence, the condition $|\mathscr{B}|\geq \binom{n-r}{l-r}$ implies $\mathscr{P}^{(l)}_r\subseteq \mathscr{B}$. Define $s$ to be the minimal integer such that $\mathscr{P}^{(l)}_s\ \subseteq \ \mathscr{B}$. Therefore, $1\leq s\leq r$.

In the case that $s=1$, we have $\mathscr{P}^{(l)}_{1}=\{P\in \binom{[n]}{l}:1\in P\}$ and $\mathscr{R}^{(k)}_{1}=\{R\in \binom{[n]}{k}:1\in R\}$. Since $\mathscr{P}^{(l)}_1\subseteq \mathscr{B}$,  $\binom{n-1}{l-1}=|\mathscr{P}^{(l)}_1|\leq|\mathscr{B}|\leq \binom{n-1}{l-1}$. This means that the only possibility is $\mathscr{B}=\mathscr{P}^{(l)}_1$. So by Lemma \ref{RP}, $\mathscr{A}=\mathscr{R}^{(k)}_1$ and, hence, $|\mathscr{A}|+c|\mathscr{B}|=\binom{n-1}{k-1}+c\binom{n-1}{l-1}$. Theorem \ref{mainthm} follows in this case.

From now on we assume that $2\leq s\leq r$. By the minimality of $s$, we have
\begin{equation}\label{P}
\mathscr{P}^{(l)}_s\ \subseteq\ \mathscr{B}\subset \mathscr{P}^{(l)}_{s-1}.
\end{equation}

By Lemma \ref{RP}, $\mathscr{B}\subset \mathscr{P}^{(l)}_{s-1}$ means that $\mathscr{R}^{(k)}_{s-1}$ is cross-intersecting with $\mathscr{B}$. Hence,  $\mathscr{R}^{(k)}_{s-1}\subseteq \mathscr{A}$ since $\mathscr{A}$ is largest. On the other hand,  $\mathscr{P}^{(l)}_s\subseteq \mathscr{B}$ means that $\mathscr{A}$ is cross-intersecting  with $\mathscr{P}^{(l)}_s$ since $\mathscr{A}$ is cross-intersecting with $\mathscr{B}$. So, again by Lemma \ref{RP}, we have $\mathscr{A}\subseteq \mathscr{R}^{(k)}_{s}$. In conclusion, (\ref{P}) implies
\begin{equation}
\mathscr{R}^{(k)}_{s-1}\subseteq \mathscr{A}\subseteq \mathscr{R}^{(k)}_s.
 \end{equation}

Consider the cross-intersecting pair $\mathscr{B}_0:=\mathscr{B}\setminus \mathscr{P}^{(l)}_s$ and $\mathscr{A}_0:=\mathscr{A}\setminus \mathscr{R}^{(k)}_{s-1}$. By (\ref{P}), we have
$B\cap[s]=[s-1]$ for any $B\in \mathscr{B}_0$, and $A\cap[s]=\{s\}$  for any $A\in \mathscr{A}_0$. Let
\begin{equation*}
Y=\binom{[s+1,n]}{l-s+1},\ \  X=\binom{[s+1,n]}{k-1},
\end{equation*}
where $[s+1,n]=\{s+1,s+2,\cdots,n\}$. Define $G_s$ to be the bipartite graph with bipartite sets $X$ and $Y$, in which $PQ$ is an edge if and only if $P\in X$, $Q\in Y$ and $P\cap Q=\emptyset$. It is clear that $G_s$ is  {\it biregular}, that is, the vertices in the same partite set have the same degree.
\begin{lem}\label{bigraph} Let $G$ be a bipartite biregular graph with partite sets $P$ and $Q$, and let $c$ be a positive real constant. Let $P_0\subseteq P$ and $Q_0\subseteq Q$. If $P_0\cup Q_0$ is independent, then $|P_0|+c|Q_0|\leq \max\{|P|,c|Q|\}$. Moreover, if $G$ is connected, then equality is possible only for $P_0\cup Q_0=P$ or $Q$.
\end{lem}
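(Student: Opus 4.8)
The plan is to reduce the statement to a two–variable linear program governed by biregularity, and then to read off the equality cases, invoking connectivity only in the single ``balanced'' case. Write $p=|P|$, $q=|Q|$, and let $d_P,d_Q$ be the common degrees of the vertices of $P$ and of $Q$. Biregularity gives $p\,d_P=q\,d_Q=:m$ (both counting the edges of $G$), and I may assume $m>0$, equivalently $d_P,d_Q>0$; an edgeless $G$ is not the relevant case, and indeed the graph $G_s$ to which the lemma is applied has edges whenever $n\ge k+l$. Set $a=|P_0|$ and $b=|Q_0|$.

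First I would exploit the independence of $P_0\cup Q_0$ by a double count of edges. Since $G$ is bipartite, the only potential edges inside $P_0\cup Q_0$ run between $P_0$ and $Q_0$, so independence means there are none; hence each of the $a\,d_P$ edges incident to $P_0$ has its other endpoint in $Q\setminus Q_0$. As the number of edges incident to $Q\setminus Q_0$ is $(q-b)\,d_Q$, this yields $a\,d_P\le (q-b)\,d_Q$. Dividing by $m=p\,d_P=q\,d_Q$ gives the single clean constraint
\[
\frac{a}{p}+\frac{b}{q}\le 1,\qquad 0\le a\le p,\ \ 0\le b\le q.
\]

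Next I would treat $a+cb$ as a linear objective over the triangle $T=\{(a,b):a,b\ge0,\ a/p+b/q\le1\}$. Being linear, it attains its maximum at a vertex, and the vertices $(0,0),(p,0),(0,q)$ give the values $0,\,p,\,cq$; hence $a+cb\le\max\{p,cq\}=\max\{|P|,c|Q|\}$, which is the asserted inequality (this uses only the relaxed real-variable constraint, so no integrality issue arises). For the equality discussion I would split on the sign of $p-cq$. If $p>cq$, then $p$ is the strict maximum among the vertex values, so by linearity the maximum over $T$ is attained only at $(p,0)$, forcing $P_0=P$ and $Q_0=\emptyset$, i.e.\ $P_0\cup Q_0=P$; symmetrically, $p<cq$ forces $P_0\cup Q_0=Q$. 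Notably, connectivity is not needed in these two cases.

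The remaining, and genuinely delicate, case is the balanced one $p=cq$, where $a+cb=p\bigl(a/p+b/q\bigr)$ is constant along the entire hypotenuse and the linear program no longer isolates a unique optimum. Here equality $a+cb=\max\{p,cq\}$ forces $a/p+b/q=1$, i.e.\ the edge count $a\,d_P\le(q-b)\,d_Q$ is tight. Tightness means that every edge incident to $Q\setminus Q_0$ issues from $P_0$, so $N(Q\setminus Q_0)\subseteq P_0$; combined with independence, which says $N(P_0)\subseteq Q\setminus Q_0$, the set $S:=P_0\cup(Q\setminus Q_0)$ sends no edge to its complement and is therefore a union of connected components of $G$. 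Invoking connectivity, $S$ is either empty or all of $V(G)$: in the first case $P_0=\emptyset$ and $Q_0=Q$, so $P_0\cup Q_0=Q$; in the second $P_0=P$ and $Q_0=\emptyset$, so $P_0\cup Q_0=P$. This closes the equality analysis, and I expect this balanced case---turning tightness of the count into the component structure---to be the main obstacle, the rest being the routine linear program.
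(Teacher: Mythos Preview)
Your proof is correct and rests on the same core inequality as the paper's---biregularity plus independence yields $a/p+b/q\le 1$ (equivalently, the paper's $|N(Q_0)|\ge |Q_0|\,|P|/|Q|$ together with $N(Q_0)\cap P_0=\emptyset$)---so the approaches are essentially the same. Your linear-programming framing and your equality analysis are somewhat more explicit than the paper's: in particular, your observation that connectivity is only needed in the balanced case $p=cq$, and your identification of $P_0\cup(Q\setminus Q_0)$ as a union of components there, sharpen the paper's terser treatment.
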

\begin{proof}  For a set $W$ of vertices, we denote by $N[W]$ the neighbourhood of $W$. Since $P_0\cup Q_0$ is independent, we have $N(P_0)\cap Q_0=\emptyset$ and $N(Q_0)\cap P_0=\emptyset$. Further, $|N[P_0]|\geq |P_0||Q|/|P|$ and $|N[Q_0]|\geq |Q_0||P|/|Q|$ since $G$ is biregular. Moreover, if $G$ is connected, then equality holds only if $P_0=P$ or $\emptyset$ and $Q_0=Q$  or $\emptyset$. Hence, if $|P|\geq c|Q|$ then we have
$$|P_0|+c|Q_0|\leq |P_0|+\frac{|P|}{|Q|}|Q_0|\leq |P_0|+\frac{|P|}{|Q|}\frac{|Q|}{|P|}|N(Q_0)|\leq |P|.$$

The discussion for the case that $|P|\leq c|Q|$  is analogous. \end{proof}

Let us first consider the case $n>k+l$. Set $\mathscr{A}_1=\{A\backslash[s]:A\in \mathscr{A}_0\}$ and $\mathscr{B}_1=\{B\backslash[s]:B\in \mathscr{B}_0\}$. Then  for any $A\in \mathscr{A}_1$ and $B\in \mathscr{B}_1$, we have $A\cap B\not=\emptyset$ since $A\cup \{s\}\in \mathscr{A},B\cup [s-1]\in \mathscr{B}$ while $\mathscr{A}$ and $\mathscr{B}$ are cross-intersecting. This means that
$\mathscr{A}_1\cup\mathscr{B}_1$ is independent in $G_s$. Let us note that $G_s$ is connected for $n>k+l$. So by Lemma \ref{bigraph},
\begin{equation}\label{bound}
|\mathscr{A}_0|+c|\mathscr{B}_0|=|\mathscr{A}_1|+c|\mathscr{B}_1|\leq\max\{|X|,c|Y|\}.
\end{equation}
Moreover, for $n>k+l$, equality is possible in (\ref{bound}) only if $\mathscr{A}_0=X,\mathscr{B}_0=\emptyset$ or $\mathscr{A}_0=\emptyset,\mathscr{B}_0=Y$. Consequently, either the maximal pair $(\mathscr{A},\mathscr{B})$ is $(\mathscr{R}^{(k)}_s,\mathscr{P}^{(l)}_s)$ or it is $(\mathscr{R}^{(k)}_{s-1},\mathscr{P}^{(l)}_{s-1})$. Hence, we have
\begin{equation}\label{max}
\max\{|\mathscr{A}|+c|\mathscr{B}|\}=|\mathscr{R}^{(k)}_i|+c|\mathscr{P}^{(l)}_i|.
\end{equation}
for some $i\in\{1,2,\cdots, r\}$.

We claim that the maximum in (\ref{max}) is achieved for $i=1$ or $i=r$. To prove this, it is sufficient to prove that there is no $i$ with $2\leq i<r$ satisfying both
\begin{align}
|\mathscr{R}^{(k)}_i|+c|\mathscr{P}^{(l)}_i|\geq |\mathscr{R}^{(k)}_{i-1}|+c|\mathscr{P}^{(l)}_{i-1}|,\\
|\mathscr{R}^{(k)}_i|+c|\mathscr{P}^{(l)}_i|\geq |\mathscr{R}^{(k)}_{i+1}|+c|\mathscr{P}^{(l)}_{i+1}|,
\end{align}
 Equivalently,
\begin{align*}
&\binom{n-i}{k-1}\geq c\binom{n-i+1}{l-i+1}-c\binom{n-i}{l-i}=c\binom{n-i}{l-i+1},\\
&c\binom{n-i-1}{l-i}\geq \binom{n-i-1}{k-1}.
\end{align*}
Multiplying the two inequalities yields
\begin{equation}
c\binom{n-i}{k-1}\binom{n-i-1}{l-i}\ \geq\ c\binom{n-i}{l-i+1}\binom{n-i-1}{k-1}
\end{equation}
or equivalently,
\begin{equation*}
\binom{n-i}{k-1}/\binom{n-i-1}{k-1}\ \geq\ \binom{n-i}{l-i+1}/\binom{n-i-1}{l-i}.
\end{equation*}
Hence,
\begin{equation}
\frac{1}{n-i-k+1}\geq \frac{1}{l-i+1}.
\end{equation}
 This contradicts the assumption that $n> k+l$. Our claim follows.

Thus we have proved that the only maximal pairs are $(\mathscr{R}^{(k)}_1,\mathscr{P}^{(l)}_1)$ or $(\mathscr{R}^{(k)}_r,\mathscr{P}^{(l)}_r)$. This concludes the proof of (\ref{main2}). The uniqueness for initial families follows as well.

To extend uniqueness to general families, we will apply a result proved independently  by F\"{u}redi, Griggs and  M\"{o}rs. To state it we need a definition. For two integers $i,j$ with $n\geq i+j$ and a family $\mathscr{F}\subset \binom{[n]}{i}$, let us define
$$\mathscr{D}_j(\mathscr{F})=\left\{D\in \binom{[n]}{j}:\exists F\in\mathscr{F},D\cap F=\emptyset\right\}.$$
With this terminology, $\mathscr{A}$ and $\mathscr{B}$ are cross-intersecting if and only if $\mathscr{A}\cap\mathscr{D}_k(\mathscr{B})=\emptyset$ or equivalently $\mathscr{B}\cap\mathscr{D}_l(\mathscr{A})=\emptyset$. They form a maximal pair if and only if $\mathscr{A}=\binom{[n]}{k}\setminus\mathscr{D}_k(\mathscr{B})$ and $\mathscr{B}=\binom{[n]}{l}\setminus\mathscr{D}_l(\mathscr{A})$.
\begin{Proposition}\label{FM} (F\"{u}redi, Griggs \cite{Furedi2}, M\"{o}rs \cite{M}). Suppose that $n>k+l, \mathscr{B}\subset\binom{[n]}{l},|\mathscr{B}|=\binom{n-r}{l-r}$ for some $r$ with $1\leq r\leq l$. Then
\begin{equation}\label{FGM}
|\mathscr{D}_k(\mathscr{B})|\geq \binom{n-r}{k}
\end{equation}
with strict inequality unless for some $R\in\binom{[n]}{r},\mathscr{B}=\{B\in\binom{[n]}{l}:R\subset B\}$.
\end{Proposition}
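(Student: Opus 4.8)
The plan is to reduce the statement to the Kruskal-Katona theorem by exploiting the duality between the disjointness operator $\mathscr{D}_k$ and the shadow under complementation. First I would record the reformulation already implicit in the excerpt: the complement $\mathscr{A}^{*}:=\binom{[n]}{k}\setminus\mathscr{D}_k(\mathscr{B})$ is precisely the set of $k$-sets meeting every member of $\mathscr{B}$, i.e. the \emph{largest} family cross-intersecting with $\mathscr{B}$. Thus proving $|\mathscr{D}_k(\mathscr{B})|\geq\binom{n-r}{k}$ is equivalent to proving $|\mathscr{A}^{*}|\leq\binom{n}{k}-\binom{n-r}{k}=|\mathscr{R}^{(k)}_r|$. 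Passing to complements $\overline{\mathscr{B}}=\{\overline B:B\in\mathscr{B}\}\subseteq\binom{[n]}{n-l}$, one also checks that $D\in\mathscr{D}_k(\mathscr{B})$ iff $D$ is a $k$-subset of some $\overline B$, so $\mathscr{D}_k(\mathscr{B})$ is exactly the $k$-element shadow of $\overline{\mathscr{B}}$, a family of $(n-l)$-sets of size $|\overline{\mathscr{B}}|=\binom{n-r}{l-r}=\binom{n-r}{n-l}$; this makes the connection with Kruskal-Katona transparent.

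For the inequality itself I would use the cross-intersecting formulation of Kruskal-Katona stated above together with Lemma \ref{RP}. Since $\mathscr{A}^{*}$ and $\mathscr{B}$ are cross-intersecting, so are $(\mathscr{A}^{*})_L$ and $\mathscr{B}_L$, with the same cardinalities. The key observation is that $\mathscr{B}_L=\mathscr{P}^{(l)}_r$: both are the lex-initial segment of $\binom{[n]}{l}$ of size $\binom{n-r}{l-r}$, the latter being $L$-initial by Lemma \ref{RP}, and two initial segments of the same length coincide. Hence $(\mathscr{A}^{*})_L$ is cross-intersecting with $\mathscr{P}^{(l)}_r$, so by the extremality part of Lemma \ref{RP} we get $|\mathscr{A}^{*}|=|(\mathscr{A}^{*})_L|\leq|\mathscr{R}^{(k)}_r|=\binom{n}{k}-\binom{n-r}{k}$, which is the desired bound. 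Here the hypothesis $n>k+l$ forces $n-l>k$, so the $k$-shadow is genuinely defined and Lemma \ref{RP} applies with $\mathscr{R}^{(k)}_r$ as the maximal cross-intersecting partner.

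\textbf{The main obstacle is the equality characterization.} The cross-intersecting form of Kruskal-Katona only compares cardinalities after compressing to lex-initial families, and this compression can collapse many distinct $\mathscr{B}$ onto the same $\mathscr{B}_L$, so it cannot by itself identify which original families are extremal. To pin down equality I would instead work on the shadow side: $|\mathscr{D}_k(\mathscr{B})|=\binom{n-r}{k}$ says that the $k$-shadow of $\overline{\mathscr{B}}$, a family of size $\binom{n-r}{n-l}$, is as small as Kruskal-Katona allows. The genuine content is then the equality case of Kruskal-Katona for family sizes equal to an exact binomial coefficient: such extremal families must be complete, i.e. $\overline{\mathscr{B}}=\binom{Y}{n-l}$ for some $(n-r)$-set $Y$. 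This is precisely the uniqueness statement of F\"{u}redi-Griggs and M\"{o}rs, and establishing it requires their refined tightness analysis (via shifting/compression and a case analysis of when the iterated shadow bound is met with equality) rather than the bare inequality. Translating back through complementation, $\overline{\mathscr{B}}=\binom{Y}{n-l}$ becomes $\mathscr{B}=\{B\in\binom{[n]}{l}:R\subseteq B\}$ with $R=[n]\setminus Y$ and $|R|=r$, which is the claimed extremal configuration.
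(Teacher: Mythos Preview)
Your proposal is correct and matches the paper's own treatment: the paper does not prove Proposition~\ref{FM} but remarks that the inequality~(\ref{FGM}) follows from the Kruskal--Katona theorem (exactly the shadow/cross-intersecting reduction you spell out via $\mathscr{D}_k(\mathscr{B})=\partial_k\overline{\mathscr{B}}$ and Lemma~\ref{RP}), while the uniqueness part is the genuine contribution of F\"uredi--Griggs and M\"ors and is simply cited. Your explicit identification of the obstacle---that lex-compression destroys the information needed for the equality case, so one must invoke the tightness analysis of \cite{Furedi2,M}---is precisely the point the paper makes in the sentence following the proposition.
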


 We should note that (\ref{FGM}) follows from the Kruskal-Katona theorem, the contribution of \cite{Furedi2} and \cite{M} is the uniqueness part. Actually, they proved analogous results for a much wider range but we only need this special case.

Let us continue with the proof of the uniqueness in the case $n>k+l,|\mathscr{A}|=\binom{n}{k}-\binom{n-r}{k},|\mathscr{B}|=\binom{n-r}{l-r}$. From Proposition \ref{FM} and $|\mathscr{A}|=\binom{n}{k}-|\mathscr{D}_k(\mathscr{B})|$, we infer $|\mathscr{D}_k(\mathscr{B})|=\binom{n-r}{k}$. Hence, for some $R\in \binom{[n]}{r}$, $\mathscr{B}=\{B\in\binom{[n]}{l}:R\subset B\}$ and $\mathscr{A}=\binom{[n]}{k}\setminus\mathscr{D}_k(\mathscr{B})=\{A\in\binom{[n]}{k}:A\cap R\not=\emptyset\}$.

Let us next consider the case $n=k+l$. First note that every $k$-set (resp., $l$-set) $F$ is disjoint to only one $l$-set (resp., $k$-set), that is, its complement $\overline{F}=[n]\setminus F$. Consequently, for a family $\mathscr{B}\subset\binom{[n]}{l}$, $\mathscr{D}_k(\mathscr{B})=\overline{\mathscr{B}}=\{\overline{B}:B\in\mathscr{B}\}$. Hence, for any maximal pair $(\mathscr{A}, \mathscr{B})$, $\mathscr{A}=\binom{[n]}{k}\setminus\overline{\mathscr{B}}$ and $|\mathscr{A}|+|\mathscr{B}|=\binom{n}{k}-\binom{n-r}{k}+\binom{n-r}{l-r}=\binom{n}{k}$ since $n=k+l$. This shows that for $c=1$, $|\mathscr{A}|+|\mathscr{B}|=\binom{n}{k}$ holds if and only if $\mathscr{B}$ is an arbitrary family with $\binom{n-r}{l-r}\leq \mathscr{B}\leq \binom{n-1}{l-1}$ and $\mathscr{A}=\binom{[n]}{k}\setminus\overline{\mathscr{B}}$.

For $c>1$, the maximum in Theorem \ref{mainthm} is $\binom{n-1}{k-1}+c\binom{n-1}{l-1}$. It is realized by any pair $(\mathscr{A}, \mathscr{B})$ with $|\mathscr{B}|=\binom{n-1}{l-1},\mathscr{A}=\binom{[n]}{k} \setminus\overline{\mathscr{B}}$.

For $c<1$, the maximum is $\binom{n}{k}-\binom{n-r}{k}+c\binom{n-r}{l-r}$. To realize it we can choose an arbitrary $\mathscr{B}\subset\binom{[n]}{l}$ satisfying $|\mathscr{B}|=\binom{n-r}{l-r}$ and set $\mathscr{A}=\binom{[n]}{k}\setminus \overline{\mathscr{B}}$. This completes the proof of Theorem \ref{mainthm}.

\noindent{\bf Proof of Corollary \ref{thm 1.7}}

Without loss of generality we assume that $|\mathscr{A}_{1}|\geq|\mathscr{A}_2|\geq\cdots\geq|\mathscr{A}_t|$. For $i\in\{1,2,\cdots,t\}$, write  $\mathscr{B}_i=(\mathscr{A}_i)_{L}$. Then, $\mathscr{B}_1\supseteq \mathscr{B}_2\supseteq\cdots\supseteq\mathscr{B}_t$ and $\sum_{i=1}^t|\mathscr{B}_i|=\sum_{i=1}^t|\mathscr{A}_i|$. Further, by the Kruskal-Katona Theorem,  $\mathscr{B}_1, \mathscr{B}_2,\cdots,\mathscr{B}_t$ are cross-intersecting and, therefore, $\mathscr{B}_i$ is intersecting for $i\geq 2$ as $\mathscr{B}_1\supseteq\mathscr{B}_i$. So by the Erd\H{o}s-Ko-Rado theorem, we have $|\mathscr{B}_2|\leq\binom{n-1}{k-1}$. Further,  $\mathscr{B}_1,\mathscr{B}_2,\cdots,\mathscr{B}_2$ are cross-intersecting too. Hence,
$$\sum\limits_{i=1}^t|\mathscr{A}_i|=\sum\limits_{i=1}^t|\mathscr{B}_i|\leq|\mathscr{B}_1|+(t-1)|\mathscr{B}_2|.$$

In Theorem \ref{mainthm}, setting $\mathscr{A}=\mathscr{B}_1, \mathscr{B}=\mathscr{B}_2,c=t-1$ and $r=k=l$, we obtain
$$\sum\limits_{i=1}^t|\mathscr{A}_i|\leq|\mathscr{B}_1|+(t-1)|\mathscr{B}_2|\leq \max\left\{\binom{n}{k}-\binom{n-k}{k}+t-1,\  t\binom{n-1}{k-1}\right\}$$
and the upper bound is sharp. This completes our proof.

\section{An application of Theorem \ref{mainthm}}

Let us recall a related result.
\begin{thm}\label{thm 1.3} (Frankl and Tokushige, \cite{F2})
Let $\mathscr{A}\subset\binom{[n]}{k}$ and $\mathscr{B}\subset\binom{[n]}{l}$ be non-empty cross-intersecting families with $n\geq k+l$ and $k\geq l$. Then
\begin{equation}\label{FT}
|\mathscr{A}|+|\mathscr{B}|\leq \binom{n}{k}-\binom{n-l}{k}+1.
\end{equation}
\end{thm}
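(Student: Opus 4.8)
The plan is to deduce Theorem~\ref{thm 1.3} from Theorem~\ref{mainthm} by setting $c=1$ and $r=l$. Then $\binom{n-r}{l-r}=\binom{n-l}{0}=1$, so the lower bound $1\le|\mathscr{B}|$ required by Theorem~\ref{mainthm} holds because $\mathscr{B}\neq\emptyset$, and the first entry of the maximum in \eqref{main2} becomes exactly $\binom{n}{k}-\binom{n-l}{k}+1$, the right-hand side of \eqref{FT}. Two gaps must be closed: Theorem~\ref{mainthm} also assumes $|\mathscr{B}|\le\binom{n-1}{l-1}$, which a general non-empty cross-intersecting $\mathscr{B}$ need not satisfy; and one must show this first entry is at least the second entry $\binom{n-1}{k-1}+\binom{n-1}{l-1}$, so that the maximum reduces to the bound in \eqref{FT}.

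To remove the size restriction I would pass to $L$-initial families via the Kruskal--Katona theorem and use the following dichotomy. If $|\mathscr{B}|>\binom{n-1}{l-1}$ then $\mathscr{B}\supseteq\mathscr{P}^{(l)}_{1}$, and since $n\ge k+l$ every $A\in\mathscr{A}$ must contain $1$: were $1\notin A$, some $(l-1)$-subset of $[2,n]\setminus A$ together with $1$ would lie in $\mathscr{P}^{(l)}_{1}$ and be disjoint from $A$. Hence $\mathscr{A}\subseteq\mathscr{R}^{(k)}_{1}$, so $|\mathscr{A}|\le\binom{n-1}{k-1}$. Thus at least one of $|\mathscr{B}|\le\binom{n-1}{l-1}$, $|\mathscr{A}|\le\binom{n-1}{k-1}$ always holds. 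In the first case I apply Theorem~\ref{mainthm} to $(\mathscr{A},\mathscr{B})$ with $c=1,r=l$; in the second I use the symmetry of the cross-intersecting condition and apply it to $(\mathscr{B},\mathscr{A})$ with $c=1,r=k$, which is legitimate because Theorem~\ref{mainthm} assumes only $n\ge k+l$ and never $k\ge l$. These give, respectively, $|\mathscr{A}|+|\mathscr{B}|\le\max\{\binom{n}{k}-\binom{n-l}{k}+1,\ \binom{n-1}{k-1}+\binom{n-1}{l-1}\}$ and $|\mathscr{A}|+|\mathscr{B}|\le\max\{\binom{n}{l}-\binom{n-k}{l}+1,\ \binom{n-1}{l-1}+\binom{n-1}{k-1}\}$.

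It then remains to check, for $n\ge k+l$ and $k\ge l$, that every entry above is at most $\binom{n}{k}-\binom{n-l}{k}+1$. For the swapped Hilton--Milner entry this is $\binom{n}{l}-\binom{n-k}{l}\le\binom{n}{k}-\binom{n-l}{k}$, which I would obtain from the observation that the number of $k$-sets meeting a fixed $l$-set is $\tfrac{\binom{n}{k}}{\binom{n}{l}}$ times the number of $l$-sets meeting a fixed $k$-set: expanding both counts as $\sum_{i\ge1}\binom{l}{i}\binom{n-l}{k-i}$ and $\sum_{i\ge1}\binom{k}{i}\binom{n-k}{l-i}$, the ratio of the $i$-th terms simplifies to the constant $\binom{n}{k}/\binom{n}{l}\ge1$, the last inequality holding because $l\le k\le n-l$. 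For the Erd\H{o}s--Ko--Rado entry this is $\binom{n-1}{k-1}+\binom{n-1}{l-1}\le\binom{n}{k}-\binom{n-l}{k}+1$, which I would prove by checking that $T_i:=\binom{n}{k}-\binom{n-i}{k}+\binom{n-i}{l-i}$ is non-decreasing in $i$ on $1\le i\le l$; its first difference is $T_{i+1}-T_i=\binom{n-i-1}{k-1}-\binom{n-i-1}{l-i}\ge0$ since $l-i\le k-1\le n-l-1$, so that $T_1=\binom{n-1}{k-1}+\binom{n-1}{l-1}\le T_l=\binom{n}{k}-\binom{n-l}{k}+1$.

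I expect the genuinely delicate step to be the removal of the size restriction: recognising that a large $\mathscr{B}$ forces $\mathscr{A}$ below the Erd\H{o}s--Ko--Rado threshold and then reapplying Theorem~\ref{mainthm} with the roles of $k$ and $l$ interchanged. Once this dichotomy is in place both binomial inequalities are short, the hypothesis $k\ge l$ entering each time through $l\le k\le n-l$. As a consistency check, at the extreme $n=k+l$ both inequalities become equalities (there $\binom{n}{k}=\binom{n}{l}$), matching the equality cases recorded in Theorem~\ref{mainthm}.
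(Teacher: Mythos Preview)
Your proposal is correct and follows essentially the same route as the paper: pass to $L$-initial families, split into two cases according to whether $|\mathscr{B}|\le\binom{n-1}{l-1}$ (the paper phrases the same dichotomy as $|\mathscr{A}|>\binom{n-1}{k-1}$), apply Theorem~\ref{mainthm} with $(c,r)=(1,l)$ or with the roles swapped and $(c,r)=(1,k)$, and then verify the two binomial inequalities $\binom{n-1}{k-1}+\binom{n-1}{l-1}\le\binom{n}{k}-\binom{n-l}{k}+1$ and $\binom{n}{l}-\binom{n-k}{l}\le\binom{n}{k}-\binom{n-l}{k}$. Your verifications differ only in detail---the monotonicity of $T_i$ is a repackaging of the paper's termwise comparison, while your ratio identity $\binom{l}{i}\binom{n-l}{k-i}\big/\binom{k}{i}\binom{n-k}{l-i}=\binom{n}{k}\big/\binom{n}{l}$ for the second inequality is a cleaner alternative to the paper's induction on $n$---but the overall architecture is identical.
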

We should mention that this result had found several applications, in particular in \cite{F2} it used to provide a simple proof of the following important result.
\begin{thm}\label{stability} (Hilton-Milner stability Theorem \cite{H3}) Suppose that $\mathscr{F}\subset\binom{[n]}{k}$ is intersecting, $\bigcap_{F\in\mathscr{F}}F=\emptyset$ and $n>2k$. Then
\begin{equation}\label{FMEQ}
|\mathscr{F}|\leq \binom{n-1}{k-1}-\binom{n-k-1}{k-1}+1.
\end{equation}
\end{thm}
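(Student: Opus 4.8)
The plan is to deduce Theorem~\ref{stability} from the cross-intersecting bound of Theorem~\ref{thm 1.3} by deleting one element and passing to the link. First I would make two harmless reductions. Enlarging $\mathscr{F}$ to a maximal intersecting family keeps it intersecting and preserves the non-triviality $\bigcap_{F}F=\emptyset$ (a family is non-trivial precisely when it is contained in no star, and this property is inherited by every superfamily), and it only increases $|\mathscr{F}|$; so I may assume $\mathscr{F}$ is a maximal intersecting family with $\bigcap_{F\in\mathscr{F}}F=\emptyset$. Since the total intersection is empty, every element is missed by some member, so I would single out a \emph{most popular} element $x$, for which both $\mathscr{F}^{x}=\{F\in\mathscr{F}:x\in F\}$ and $\mathscr{F}_{x}=\{F\in\mathscr{F}:x\notin F\}$ are non-empty.

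Next I would delete $x$. Put $\mathscr{A}=\{F\setminus\{x\}:F\in\mathscr{F}^{x}\}\subseteq\binom{[n]\setminus\{x\}}{k-1}$ and $\mathscr{B}=\mathscr{F}_{x}\subseteq\binom{[n]\setminus\{x\}}{k}$, both on the $(n-1)$-element ground set $[n]\setminus\{x\}$. Because $\mathscr{F}$ is intersecting and no member of $\mathscr{B}$ contains $x$, the pair $\mathscr{A},\mathscr{B}$ is cross-intersecting, and $|\mathscr{F}|=|\mathscr{A}|+|\mathscr{B}|$. Two extra features, exactly the ones a bare cross-intersecting hypothesis cannot see, are crucial: $\mathscr{B}=\mathscr{F}_{x}$ is itself intersecting, so $|\mathscr{B}|\le\binom{n-2}{k-1}$ by Theorem~\ref{thm 1.1}; and fixing any $Y\in\mathscr{B}$, every member of $\mathscr{A}$ meets the $k$-set $Y$, whence $|\mathscr{A}|\le\binom{n-1}{k-1}-\binom{n-k-1}{k-1}$, which is already the main term of the target bound. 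In particular, when $|\mathscr{F}_{x}|=1$ this single-$Y$ estimate together with $|\mathscr{B}|=1$ gives the Hilton--Milner value outright, and Theorem~\ref{thm 1.3} (applicable since $n-1\ge k+(k-1)$, as $n>2k$) is meant to control the trade-off in the remaining case $|\mathscr{F}_{x}|\ge 2$, where $\mathscr{A}$ must meet \emph{all} of $\mathscr{B}$ and hence shrinks.

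The hard part is that Theorem~\ref{thm 1.3} by itself is too weak in the orientation forced on us: its extremal pair takes $\mathscr{A}$ to be a single $(k-1)$-set and $\mathscr{B}$ to be all $k$-sets meeting it, giving the strictly larger value $\binom{n-1}{k}-\binom{n-k}{k}+1$ rather than the Hilton--Milner value. The decisive observation I would push is that this extremal configuration cannot come from our $\mathscr{F}$: for $k\ge 3$ the family ``all $k$-sets meeting a fixed $(k-1)$-set'' is not intersecting, contradicting that $\mathscr{B}=\mathscr{F}_{x}$ is intersecting, while for $k=2$ it lifts through $x$ to a star, contradicting $\bigcap_{F}F=\emptyset$. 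Thus the genuine work is to feed the intersecting property of $\mathscr{B}$ (i.e. the bound $|\mathscr{B}|\le\binom{n-2}{k-1}$ and the most-popular choice of $x$) together with non-triviality into the optimisation; concretely I would run the argument through Theorem~\ref{mainthm} with $r=k$ and $c=1$ on the pair $(\mathscr{A},\mathscr{B})$, whose two candidate optima are exactly the Hilton--Milner value $\binom{n-1}{k-1}-\binom{n-k-1}{k-1}+1$ and the star value $\binom{n-1}{k-1}$, and then use the uniqueness part to discard the star branch because $\bigcap_{F}F=\emptyset$. Reconciling the crude maximum $\binom{n-1}{k-1}$ with the intersecting trade-off so as to collapse it all the way to the Hilton--Milner value is the main obstacle, and it is precisely here that the non-triviality hypothesis has to be used in full strength; the equality discussion would finally recover the extremal families $\{F:1\in F,\ F\cap T\neq\emptyset\}\cup\{T\}$.
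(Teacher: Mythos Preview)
First a framing point: the paper does \emph{not} give its own proof of Theorem~\ref{stability}. It only states the Hilton--Milner theorem and remarks that in \cite{F2} Theorem~\ref{thm 1.3} ``was used to provide a simple proof'' of it; the actual argument is left to that reference. So there is no paper proof to compare against, only the hint that the derivation in \cite{F2} passes through the cross-intersecting inequality.

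On the substance of your plan, the set-up is the standard and correct one: delete a point $x$ that is covered but not universally, form the link $\mathscr{A}=\{F\setminus\{x\}:x\in F\in\mathscr{F}\}\subset\binom{[n]\setminus\{x\}}{k-1}$ and the residual $\mathscr{B}=\mathscr{F}_{x}\subset\binom{[n]\setminus\{x\}}{k}$, observe that $\mathscr{A},\mathscr{B}$ are non-empty cross-intersecting on an $(n-1)$-set, and that $\mathscr{B}$ is itself intersecting so $|\mathscr{B}|\le\binom{n-2}{k-1}$. Up to here everything is fine and matches the route indicated by the paper's reference to \cite{F2}.

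The gap is exactly where you flag ``the main obstacle'' and then do not close it. Applying Theorem~\ref{mainthm} with ground set of size $n-1$, uniformities $(k-1,k)$, $r=k$, $c=1$ yields
\[
|\mathscr{A}|+|\mathscr{B}|\ \le\ \max\Bigl\{\tbinom{n-1}{k-1}-\tbinom{n-k-1}{k-1}+1,\ \tbinom{n-2}{k-2}+\tbinom{n-2}{k-1}\Bigr\}
=\max\Bigl\{\text{HM},\ \tbinom{n-1}{k-1}\Bigr\}.
\]
For $n>2k$ and $k\ge 2$ one has $\binom{n-k-1}{k-1}\ge k\ge 2$, so the maximum is $\binom{n-1}{k-1}$, and the inequality collapses to the Erd\H{o}s--Ko--Rado bound, not the Hilton--Milner bound. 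Invoking the uniqueness clause of Theorem~\ref{mainthm} only tells you that $|\mathscr{A}|+|\mathscr{B}|=\binom{n-1}{k-1}$ forces $\mathscr{F}$ to be a star; it rules out \emph{equality} with $\binom{n-1}{k-1}$ but says nothing about the whole interval between the HM value and $\binom{n-1}{k-1}-1$. So as written the argument proves only $|\mathscr{F}|\le\binom{n-1}{k-1}-1$, which is far from \eqref{FMEQ}. The missing ingredient, which the Frankl--Tokushige derivation supplies, is an additional structural input beyond ``$\mathscr{B}$ is intersecting'' --- in their approach one first shifts $\mathscr{F}$, so that $\mathscr{F}(\bar 1)$ is shifted on $[2,n]$ and therefore contains the specific set $\{2,\dots,k+1\}$, which pins down the main term $\binom{n-1}{k-1}-\binom{n-k-1}{k-1}$ for $\mathscr{F}(1)$ and lets the cross-intersecting inequality finish off $\mathscr{F}(\bar 1)$. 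Your outline gestures at this but does not carry it out.
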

Let us derive now (\ref{FT}) from Theorem \ref{mainthm}. Without loss of generality, we may assume that $\mathscr{A}=\mathscr{A}_L,\mathscr{B}=\mathscr{B}_L$, i.e., both families are initial. We distinguish two cases.

\noindent{\bf (a).} $|\mathscr{A}|>\binom{n-1}{k-1}$.

Since $\mathscr{A}$ is initial, $\mathscr{A}\supset\mathscr{P}^{(k)}_1=\mathscr{R}^{(k)}_1$ follows. Now the cross-intersecting property implies $\mathscr{B}\supset\mathscr{P}^{(l)}_1$, in particular $|\mathscr{B}|\leq\binom{n-1}{l-1}$.

Applying Theorem \ref{mainthm} with $c=1$ and $r=l$ yields
\begin{equation}\label{10}
|\mathscr{A}|+|\mathscr{B}|\leq \max\left\{\binom{n}{k}-\binom{n-l}{k}+1,\  \binom{n-1}{k-1}+\binom{n-1}{l-1}\right\}.
\end{equation}

\noindent{\bf (b).} $|\mathscr{A}|\leq\binom{n-1}{k-1}$.

Since $|\mathscr{A}|\geq\binom{n-k}{k-k}=1$, we may apply Theorem \ref{mainthm} with the role of $\mathscr{A}$ and $\mathscr{B}$ interchanged, $r=k,c=1$, for obtaining
\begin{equation}\label{11}
|\mathscr{B}|+|\mathscr{A}|\leq \max\left\{\binom{n}{l}-\binom{n-k}{l}+1,\  \binom{n-1}{l-1}+\binom{n-1}{k-1}\right\}.
\end{equation}

Comparing (\ref{10}) and (\ref{11}) with (\ref{FMEQ}), to conclude the proof we must show the following two inequalities:
\begin{equation}\label{12}
\binom{n-1}{k-1}+\binom{n-1}{l-1}\leq \binom{n}{k}-\binom{n-l}{k}+1,
\end{equation}
\begin{equation}\label{13}
\binom{n}{l}-\binom{n-k}{l}\leq \binom{n}{k}-\binom{n-l}{k}.
\end{equation}
Using the formulae
$$\binom{n-1}{l-1}=\binom{n-2}{l-1}+\binom{n-3}{n-2}+\cdots+\binom{n-l-1}{0}$$
and
$$\binom{n}{k}-\binom{n-l}{k}=\binom{n-1}{k-1}+\cdots+\binom{n-l}{k-1},$$
(\ref{12}) is equivalent to
$$\binom{n-2}{l-1}+\cdots+\binom{n-l}{1}\leq\binom{n-2}{k-1}+\cdots+\binom{n-l}{k-1}.$$
This inequality follows by the termwise comparison
\begin{equation}\label{14}
\binom{n-i}{l+1-i}\leq \binom{n-i}{k-1}, \ 2\leq i\leq l.
\end{equation}
Since for $i\geq 2, l+1-i\leq k-1$ and $(l+1-i)+(k-1)=k+l-i\leq n-i$, (\ref{14}) and thereby (\ref{12}) hold.

To prove (\ref{13}) is not hard either. If $n=k+l$ then we have equality. Let us apply induction on $n$, supposing that (\ref{13}) holds for all triples
$(\widetilde{n},\widetilde{k},\widetilde{l})$ with $\widetilde{n}\geq\widetilde{k}+\widetilde{l},\widetilde{k}\geq\widetilde{l}\geq 1$.
Thus we may use the following three inequalities
$$\binom{n}{l}-\binom{n-k}{l}\leq \binom{n}{k}-\binom{n-l}{k},$$
$$\binom{n-1}{l-1}-\binom{(n-1)-(k-1)}{l-1}\leq \binom{n-1}{k-1}-\binom{(n-l)-(l-1)}{k-l}$$
and
$$\binom{n-1}{l-2}\leq \binom{n-1}{k-2}.$$

Summing them up yields (\ref{13}) and concludes the new proof of Theorem \ref{thm 1.3}.

\section{Remark and open problems}

Let us recall that two families $\mathscr{A},\mathscr{B}$ are called cross-$q$-intersecting if $|A\cap B|\geq q$ for all $A\in\mathscr{A},B\in\mathscr{B}$. The following are two related results concerning cross-$q$-intersecting families.
\begin{thm}\label{FK} (Frankl and Kupavskii, \cite{F1})
Let $\mathscr{A}, \mathscr{B}\subset\binom{[n]}{k}$ be non-empty cross-$q$-intersecting families with $k>q\geq 1$ and $n>2k-q$. Then
\begin{align*}
|\mathscr{A}|+|\mathscr{B}|\leq \binom{n}{k}-\sum\limits_{i=0}^{q-1}\binom{k}{i}\binom{n-k}{k-i}+1.
\end{align*}
\end{thm}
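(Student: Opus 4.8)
The plan is to convert the sum bound into a single lower bound on a $q$-disjointness shadow, then reduce to shifted families and prove that shadow bound. As in the proof of Theorem~\ref{mainthm}, it suffices to treat a maximal pair, so I take $\mathscr{A}$ to be the largest family cross-$q$-intersecting with $\mathscr{B}$, namely $\mathscr{A}=\binom{[n]}{k}\setminus\mathscr{D}^{q}(\mathscr{B})$, where
$$\mathscr{D}^{q}(\mathscr{B})=\left\{A\in\binom{[n]}{k}:|A\cap B|\leq q-1\ \text{for some}\ B\in\mathscr{B}\right\}$$
is the set of $k$-sets failing to $q$-intersect some member of $\mathscr{B}$. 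A single $k$-set $B$ gives $|\mathscr{D}^{q}(\{B\})|=\sum_{i=0}^{q-1}\binom{k}{i}\binom{n-k}{k-i}=:s_q$, and $|\mathscr{A}|+|\mathscr{B}|=\binom{n}{k}-|\mathscr{D}^{q}(\mathscr{B})|+|\mathscr{B}|$, so the theorem is equivalent to
\begin{equation}\label{eq:shadow}
|\mathscr{D}^{q}(\mathscr{B})|\ \geq\ |\mathscr{B}|+s_q-1 .
\end{equation}
Here the non-emptiness of $\mathscr{A}$ forces $\mathscr{D}^{q}(\mathscr{B})\neq\binom{[n]}{k}$, which is the regime in which \eqref{eq:shadow} should hold; intuitively each additional member of $\mathscr{B}$ must push at least one new $k$-set into the $q$-disjointness shadow.

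Second, I would reduce \eqref{eq:shadow} to the case of a shifted $\mathscr{B}$. Applying the usual compressions $S_{ij}$ to $\mathscr{B}$ preserves the family size $|\mathscr{B}|$ and the cross-$q$-intersecting relation, and one checks—exactly as for ordinary shadows—that these compressions can only shrink $\mathscr{D}^{q}(\mathscr{B})$. Hence it is enough to prove the lower bound \eqref{eq:shadow} for shifted $\mathscr{B}$, which supplies the extra structure that every member is concentrated on the small coordinates.

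Third, the heart of the matter is \eqref{eq:shadow} for shifted $\mathscr{B}$. The clean reading is as an injection $\phi:\mathscr{B}\setminus\{B_0\}\to\mathscr{D}^{q}(\mathscr{B})\setminus\mathscr{D}^{q}(\{B_0\})$ for the lexicographically first member $B_0$, where each witness $\phi(B)$ is a $k$-set with $|\phi(B)\cap B|\leq q-1$ but $|\phi(B)\cap B_0|\geq q$. Existence of at least one admissible witness for each $B\neq B_0$ is easy: the shadows $\mathscr{D}^{q}(\{B\})$ and $\mathscr{D}^{q}(\{B_0\})$ both have size $s_q$ and, for $B\neq B_0$, cannot coincide, so their difference is non-empty. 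The hypothesis $n>2k-q$ enters precisely in constructing such a witness: after spending the $q-1$ coordinates forced by shiftedness plus a few inside $B$, one must still complete a $k$-set using points outside $B\cup B_0$, and $n>2k-q$ is exactly the amount of room required.

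The \textbf{main obstacle} I expect is the \emph{injectivity} of $\phi$: distinct members of $\mathscr{B}$ may a priori share admissible witnesses, so a naive choice will not separate them. I would resolve this either by a Hall-type (defect-version) matching argument on the bipartite deficiency graph between $\mathscr{B}$ and $\mathscr{D}^{q}(\mathscr{B})$, or, more robustly, by replacing the injection with an induction on $n$ in the spirit of Kruskal--Katona: split the shifted $\mathscr{B}$ according to whether a fixed coordinate lies in a set, express $\mathscr{D}^{q}(\mathscr{B})$ through the shadows of the two links, and feed $n>2k-q$ into the inductive step. Finally, tracing equality in \eqref{eq:shadow} back through the reduction shows it is attained by the configuration $|\mathscr{B}|=1$ with $\mathscr{A}=\{A\in\binom{[n]}{k}:|A\cap B|\geq q\}$, giving exactly the claimed bound.
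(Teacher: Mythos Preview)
The paper does not prove this theorem. Theorem~\ref{FK} is quoted in Section~4 (``Remark and open problems'') as a known result of Frankl and Kupavskii \cite{F1}, cited only to motivate Problems~\ref{c1}--\ref{c3}; no argument for it appears anywhere in the text. So there is no ``paper's own proof'' to compare your proposal against.

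As for the proposal itself: the reduction to the shadow inequality \eqref{eq:shadow} is correct, and the shifting step is valid (it follows from the fact that simultaneous shifts preserve cross-$q$-intersection, applied with $\mathscr{A}=\binom{[n]}{k}\setminus\mathscr{D}^{q}(\mathscr{B})$), though your justification ``exactly as for ordinary shadows'' glosses over this. However, the core of the argument---establishing \eqref{eq:shadow} for shifted $\mathscr{B}$---is left as a plan rather than a proof. You correctly identify injectivity of the witness map as the obstacle and gesture at a Hall-type or inductive fix, but neither is carried out; in particular, the inductive scheme (split on a coordinate, control the two links) is precisely where the work lies, and the parameter constraint $n>2k-q$ has to be threaded through it carefully. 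What you have is a sound outline, not yet a proof.
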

\begin{thm}\label{WZ} (Wang and Zhang, \cite{W})
Let $n\geq 4$, $k,l\geq 2$, $q<\min\{k,l\}$, $n>k+l-q$, $(n,q)\neq(k+l,1),\binom{n}{k}\leq\binom{n}{l}$. Then for any non-empty cross-$q$-intersecting families $\mathscr{A}\subset\binom{[n]}{k}$ and $\mathscr{B}\subset\binom{[n]}{l}$,
\begin{align*}
|\mathscr{A}|+|\mathscr{B}|\leq\binom{n}{k}-\sum\limits_{i=0}^{q-1}\binom{k}{i}\binom{n-k}{l-i}+1.
\end{align*}
\end{thm}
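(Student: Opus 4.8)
The inequality is of Hilton--Milner type, so I expect the extremal pair to consist of a singleton and a ``star''. Fix a $k$-set $A_0$ and take $\mathscr{A}=\{A_0\}$ together with $\mathscr{B}=\{B\in\binom{[n]}{l}:|A_0\cap B|\geq q\}$. This pair is cross-$q$-intersecting, and exactly $\sum_{i=0}^{q-1}\binom{k}{i}\binom{n-k}{l-i}$ of the $l$-sets meet $A_0$ in fewer than $q$ points, so this is the candidate extremal configuration. The hypothesis $\binom{n}{k}\leq\binom{n}{l}$ is what tells us the singleton must sit on the $k$-side while the large star occupies the more numerous $l$-side; the whole plan is to prove that no cross-$q$-intersecting pair beats this configuration and that equality forces it.

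My first step is to compress. Applying a left-shift $S_{ij}$ with $i<j$ simultaneously to $\mathscr{A}$ and $\mathscr{B}$ preserves their cardinalities and the cross-$q$-intersecting property, so I may assume both families are shifted. This plays the structural role that the Kruskal--Katona theorem plays in the proof of Theorem \ref{mainthm}; for $q\geq 2$ the clean reduction to lexicographic initial segments is no longer available, and shiftedness is the property I can still guarantee and exploit.

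Next, mirroring the use of Proposition \ref{FM} in Theorem \ref{mainthm}, I would control $\mathscr{B}$ through the ``$<q$-shadow'' of $\mathscr{A}$, namely $\mathscr{D}^{<q}_l(\mathscr{A})=\{B\in\binom{[n]}{l}:|A\cap B|<q\ \text{for some}\ A\in\mathscr{A}\}$. Being cross-$q$-intersecting means $\mathscr{B}\cap\mathscr{D}^{<q}_l(\mathscr{A})=\emptyset$, whence $|\mathscr{A}|+|\mathscr{B}|\leq\binom{n}{l}+|\mathscr{A}|-|\mathscr{D}^{<q}_l(\mathscr{A})|$, and the target reduces to the single-family inequality $|\mathscr{D}^{<q}_l(\mathscr{A})|\geq|\mathscr{A}|-1+\sum_{i=0}^{q-1}\binom{k}{i}\binom{n-k}{l-i}$. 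I would establish this by induction on $q$: the base case $q=1$ is Theorem \ref{thm 1.3} with the roles of $k$ and $l$ interchanged, and for the inductive step I split $\mathscr{A}$ by the element $1$, the part containing $1$ producing a cross-$(q-1)$-intersecting sub-problem on $\{2,\dots,n\}$ and the part avoiding $1$ a cross-$q$ sub-problem on $n-1$ points; the conditions $n>k+l-q$ and $(n,q)\neq(k+l,1)$ are precisely the ranges that keep these sub-problems in force.

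The main obstacle is twofold. First, the shadow inequality above cannot hold for arbitrarily large $\mathscr{A}$: once $\mathscr{A}$ is big, $\mathscr{B}$ is forced down to a singleton and one must argue directly, so the proof must split into a ``small $\mathscr{A}$'' regime, where the shadow estimate applies, and a ``large $\mathscr{A}$'' regime, where one compares against the singleton-plus-star value; it is exactly in comparing the two regimes that $\binom{n}{k}\leq\binom{n}{l}$ decides the outcome. Second, establishing the shadow inequality with the sharp constant and pinning down its equality case, which is needed for the characterization of the extremal families, is the genuine analogue of the F\"uredi--Griggs--M\"ors theorem for $q\geq 2$; I expect this sharp lower bound, proved for compressed $\mathscr{A}$ by an exchange argument and then transported back through the shifting reduction, to be where the real work concentrates.
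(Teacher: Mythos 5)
You should know at the outset that the paper contains no proof of Theorem \ref{WZ}: it is quoted verbatim from Wang and Zhang \cite{W} in Section~4, purely as motivation for Problems \ref{c1}--\ref{c3}. The machinery developed in the paper (the Kruskal--Katona reduction to $L$-initial families, Lemma \ref{bigraph}, Proposition \ref{FM}) handles only the $q=1$ situation of Theorem \ref{mainthm}; indeed the fact that it does not extend to $q\geq 2$ is exactly why the cross-$q$-intersecting analogues are posed as open problems. So there is no internal argument to compare yours against, and your proposal has to be judged on its own merits.

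On those merits, there is a concrete inconsistency you never notice. Your own candidate extremal pair $\mathscr{A}=\{A_0\}$, $\mathscr{B}=\{B\in\binom{[n]}{l}:|A_0\cap B|\geq q\}$ has total size $\binom{n}{l}-\sum_{i=0}^{q-1}\binom{k}{i}\binom{n-k}{l-i}+1$, which exceeds the bound you set out to prove whenever $\binom{n}{k}<\binom{n}{l}$: for $n=6,k=2,l=3,q=1$ all hypotheses hold, the printed bound is $15-4+1=12$, yet your configuration gives $17$. So the statement as printed here is false as literally written (the leading $\binom{n}{k}$ should be $\binom{n}{l}$, i.e., the star sits on the larger side; in Wang--Zhang's original the normalization is $\binom{n}{k}\geq\binom{n}{l}$ with the complementary sum $\sum_i\binom{l}{i}\binom{n-l}{k-i}$), and your reduction ``the target reduces to $|\mathscr{D}^{<q}_l(\mathscr{A})|\geq|\mathscr{A}|-1+\sum_{i=0}^{q-1}\binom{k}{i}\binom{n-k}{l-i}$'' is the reduction for the \emph{corrected} bound: for the printed one you would need the additional term $\binom{n}{l}-\binom{n}{k}$, which you silently dropped. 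A careful treatment should have flagged the typo rather than absorbed it. Beyond this, your argument is a program rather than a proof: the sharp lower bound on the $<q$-shadow, with its small-$\mathscr{A}$/large-$\mathscr{A}$ regime split and equality analysis, \emph{is} the content of the theorem, as you yourself concede, and the sketched induction on $q$ (splitting on the element $1$) leaves unresolved the propagation of non-emptiness to subfamilies, the boundary cases where $n-1$ no longer exceeds $k+l-q$ (which occur immediately when $n=k+l-q+1$), and the possible flipping of the side condition $\binom{n}{k}\leq\binom{n}{l}$ under the recursion. For the record, Wang and Zhang's actual route is entirely different: they prove a general theorem on maximum nontrivial independent sets in bipartite graphs whose two parts are orbits of the automorphism group --- a heavy-duty relative of Lemma \ref{bigraph} --- and apply it to the bipartite graph on $\binom{[n]}{k}\cup\binom{[n]}{l}$ joining $A$ and $B$ when $|A\cap B|<q$; no shifting or shadow induction appears there.
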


Based on the two theorems above, the following three problems are inspired naturally by Corollary \ref{thm 1.7}:

\begin{problem}\label{c1}
Let $\mathscr{A}_1\subset\binom{[n]}{k_1},\mathscr{A}_2\subset\binom{[n]}{k_2},\cdots,\mathscr{A}_t\subset \binom{[n]}{k_t}$ be non-empty cross-intersecting families with $k_1\geq k_2\geq\cdots\geq k_t$, $n\geq k_1+k_2$ and $t\geq 2$. Is it true that
\begin{align*}
\sum\limits_{i=1}^t|\mathscr{A}_i|\leq\max\left\{\binom{n}{k_1}-\binom{n-k_t}{k_1}+\sum\limits_{i=2}^{t}\binom{n-k_t}{k_i-k_t},\ \sum\limits_{i=1}^t\binom{n-1}{k_i-1}\right\}?
\end{align*}
\end{problem}

 We note that if we set $c=t-1$ in Theorem \ref{mainthm}, then we obtain a positive answer to Problem \ref{c1} for the special case that $k_2=\cdots= k_t$.

\begin{problem}\label{c2}
Let $\mathscr{A}_1,\mathscr{A}_2,\cdots, \mathscr{A}_t\subset \binom{[n]}{k}$ be non-empty cross-$q$-intersecting families with $k>q\geq 1$, $n>2k-q$ and $t\geq 2$. Is it true that
\begin{align*}
\sum\limits_{i=1}^t|\mathscr{A}_i|\leq\max\left\{\binom{n}{k}-\sum\limits_{i=0}^{q-1}\binom{k}{i}\binom{n-k}{k-i}+t-1,\ t\binom{n-q}{k-q}\right\}?
\end{align*}
\end{problem}

\begin{problem}\label{c3}
Let $\mathscr{A}_1\subset\binom{[n]}{k_1},\mathscr{A}_2\subset\binom{[n]}{k_2},\cdots,\mathscr{A}_t\subset \binom{[n]}{k_t}$ be non-empty cross-$q$-intersecting families with $k_1\geq k_2\geq\cdots\geq k_t>q\geq 1$, $n>k_1+k_2-q$ and $t\geq 2$. Is it true that
\begin{align*}
\sum\limits_{i=1}^t|\mathscr{A}_i|\leq\max\left\{\binom{n}{k_1}-\sum\limits_{i=0}^{q-1}\binom{k_t}{i}\binom{n-k_t}{k_1-i}+\sum\limits_{i=2}^{t}\binom{n-k_t}{k_i-k_t} ,\ \sum\limits_{i=1}^t\binom{n-q}{k_i-q}\right\}?
\end{align*}
\end{problem}

We note that a positive answer to Problem \ref{c3} would imply that to Problem \ref{c2} and, hence, to Problem \ref{c1}. Moreover, the upper bound in Problem \ref{c3} is attained by setting  $\mathscr{A}_1=\{A\in \binom{[n]}{k_1}:|A\cap[k_t]|\geq q\}$ and $\mathscr{A}_i=\{A\in \binom{[n]}{k_i}:A\supseteq[k_t]\}$ for $i\in\{2,3,\cdots,t\}$ if
\begin{equation}\label{p3}
\binom{n}{k_1}-\sum\limits_{i=0}^{q-1}\binom{k_t}{i}\binom{n-k_t}{k_1-i}+\sum\limits_{i=2}^{t}\binom{n-k_t}{k_i-k_t}\geq \sum\limits_{i=1}^t\binom{n-q}{k_i-q},
\end{equation}
or setting $\mathscr{A}_i=\{A\in \binom{[n]}{k_i}:A\supseteq[q]\}$ for all $i\in\{1,2,\cdots,t\}$ if the `$\geq$' in (\ref{p3}) is `$\leq$'.

\section{Acknowledgements}
Research partially supported by the National Natural Science Foundation of China [Grant numbers, 11971406] and  the Ministry of Education and Science of the Russian Federation in the framework of MegaGrant  [Grant number, 075-15-2019-1926].

\end{document}